\newtheorem{theorem}{Theorem}
\newtheorem{lemma}[theorem]{Lemma}
\newtheorem{remark}[theorem]{Remark}
\newtheorem{assumption}[theorem]{Assumption}
\newcommand*{\N}{\ensuremath{\mathbb{N}}}
\newcommand*{\Z}{\ensuremath{\mathbb{Z}}}
\newcommand*{\R}{\ensuremath{\mathbb{R}}}
\newcommand*{\C}{\ensuremath{\mathbb{C}}}
\renewcommand{\i}{\mathrm{i}}
\renewcommand{\phi}{\varphi}
\renewcommand{\rho}{{\varrho}}
\renewcommand{\epsilon}{{\varepsilon}}
\renewcommand{\d}[1]{\,\mathrm{d}#1 \,}
\newcommand{\J}{\mathcal{J}} 
\newcommand{\0}{{0}} 
\newcommand{\grad}{\nabla}
\newcommand{\W}{{W_{\hspace*{-1pt}{\Lambda}}}} 
\newcommand{\Wast}{{W_{\hspace*{-1pt}{\Lambda}^\ast}}} 
\newlength{\dhatheight}
\begin{document}

\sloppy

\title{A Bloch transform based  numerical method for the rough surface scattering problems}
\author{Ruming Zhang\thanks{Center for Industrial Mathematics, University of Bremen
; \texttt{rzhang@uni-bremen.de}}}
\date{}
\maketitle

\begin{abstract}
In this paper, we will study the Bloch transformed  rough surface scattering problems, and propose a numerical method based on the Bloch transformed problems. Based on the mathematical theory of the scattering problems from locally perturbed periodic surfaces, the same techniques will be applied to the rough surface scattering problems, and an equivalent coupled family of quasi-periodic scattering problems in one periodic cell will be established. The most important result obtained in this paper is on the finite Fourier series approximation of the Bloch transformed field with respect to the quasi-periodicity parameter. It will be proved that the finite series is exactly the Bloch transformed solution corresponds to truncated rough surfaces. Thus the truncation provides a reasonable approximation, and could be applied to the numerical solutions. Based on the approximation, a numerical method is proposed for the rough surface scattering problems. The convergence of the numerical method is proved and illustrated by the numerical experiments. The method provides a completely new perspective for the rough surface scattering problems. There is possibility that some high order method will be developed based on this new method.
\end{abstract}

\section{Introduction}

The scattering problems from rough structures are always interesting but challenging topics. Mathematicians have been working on both the theoretical analysis and numerical implementation of this topic for decades. Based on integral equations, the well-posedness of the problems has been investigated, see \cite{Chand1996,Chand1998,Chand1999,Zhang2003}. A Nystr\"{o}m method for the integral equation on the real line has been developed for the rough surface scattering problems, see \cite{Meier2000,Arens2002}. In 2005, Chandler-Wilde and Monk proposed a  variational method (see \cite{Chand2005}) for the  investigation of the well-posedness of the scattering problems from rough surfaces in both 2D and 3D spaces. Based on the variational method, the unique solvability of the scattering from rough surfaces has been proved in weighted Sobolev spaces in \cite{Chand2010}, in which more generalized cases (e.g. plane waves in 2D spaces) are included. Similar results in weighted Sobolev spaces has been shown for more generalized boundary conditions in \cite{Hu2015}. Based on the variational formulation in weighted spaces, a so-called "finite section method" has been proposed for numerical approximations in \cite{Chand2010}.

Recently, a Floquet-Bloch transform based method has been proposed to treat  scattering problems from a special kind of rough surfaces, i.e., the locally perturbed periodic surfaces. To the author's knowledge, this method was first introduced in the paper \cite{Coatl2012}, for scattering problems from locally perturbed periodic mediums. For the non-periodic incident fields  (e.g., the Herglotz wave functions or the point sources) scattered by locally perturbed periodic surfaces, the scattering problems are transformed into an equivalent coupled family of quasi-periodic scattering problems by the Bloch transform (see \cite{Lechl2015e,Lechl2016}). Based on the theoretical analysis for the Bloch-transformed scattering problems, convergent numerical methods have been developed for the scattering problems, for periodic surfaces see \cite{Lechl2016a}, and for  locally perturbed periodic surfaces see \cite{Lechl2017}. Discussions are also made for 3D acoustic and electromagnetic problems, and the first numerical experiment has been carried out for the 3D Helmholtz equations, see \cite{Lechl2016b}. For the scattering problems from locally perturbed periodic  layers, similar method has been adopted in \cite{Hadda2015}.  The Bloch transform was also applied to the scattering problems in locally perturbed periodic waveguides, see \cite{Fliss2015}.

In this paper, the Floquet-Bloch transform will be applied to the rough surface scattering problems. With the technique introduced in \cite{Lechl2016}, the problem is transformed into one defined in an infinite rectangle. With the help of the Bloch transform, the new problem is decomposed into a coupled family of quasi-periodic scattering problems, and the equivalence between the original problem and the coupled family of quasi-periodic problems will be proved. The main difficulty comes from the term involves the perturbation, which is no longer compactly supported, as that for  locally perturbed cases. An interesting result shows that, the approximated finite Fourier series of the Bloch transformed field with respect to the quasi-periodicity parameter is exactly the Bloch transformed solution of the scattering problem with truncated rough surface. It could be a good choice of approximation for the scattering problems in unbounded domains, for the numerical method of the locally perturbed problems have been well studied in \cite{Lechl2017}, and a high order method was developed in \cite{Zhang2017e}. Based on this result, a finite element method will be developed for the numerical solution. Although there are convergent numerical methods for the simulation of the scattering problems, e.g., the one based on the integral equations (see \cite{Arens2002,Meier2000}) or variational formulations (see \cite{Chand2010}), the new method may provide very different perspective for the rough surface scattering problems. It is also expected that the Bloch transform based numerical method will be improved and a high order method will be developed following \cite{Zhang2017e} in the future.

The rest of the paper is organized as follows. In Section 2,  we will recall the mathematical model of the rough surface scattering problems and its unique solvability. In Section 3, we will formulate the weak formulation of the Bloch transformed scattering problems, and study the equivalence and unique solvability of the newly established variational problem. In Section 4, the finite Fourier series  approximation of the Bloch transformed fields will be studied. Based on the approximation, a finite element method will be proposed in Section 5, and details of the numerical implementation will be explained in Section 6. In the last section, we will give several numerical experiments to illustrate the convergence result obtained in Section 5

\section{Scattering from rough surfaces}

In this section, we  recall the mathematical modal and well-posedness of the scattering problems from rough surfaces in two dimensional spaces. For details we refer to  the papers \cite{Chand2005,Chand2010}.

Let $\zeta$ be a bounded function defined in $\R$.  
In this paper, we have to require that the following assumption holds for the functions $\zeta$. 
\begin{assumption}\label{asp1} 
$\zeta$ is a Lipschitz continuous function defined on $\R$. Suppose there is a positive $C>0$ such that
\begin{equation*}
\left\|\zeta\right\|_{1,\infty}\leq C.
\end{equation*}
Without loss of generality, assume that $\zeta(t)\geq c>0$ for any $t\in\R$.
\end{assumption}

\begin{figure}[H]
\centering
\includegraphics[width=15cm]{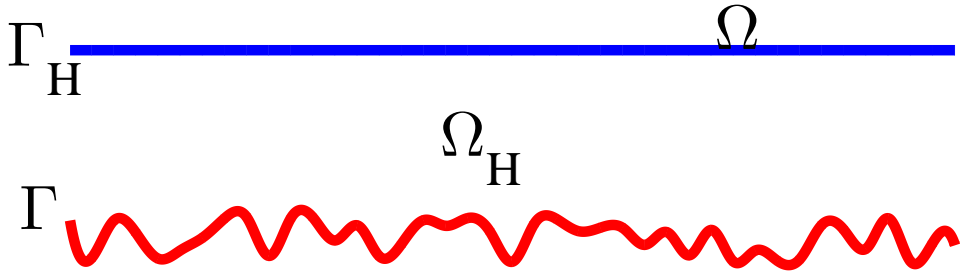}
\caption{A global rough surface.}
\end{figure}

Define the rough surface by
\begin{equation*}
\Gamma:=\left\{(x_1,\zeta(x_1)):\,x_1\in\R\right\}.
\end{equation*}
Let $\Gamma_h$ be defined by the straight line $\R\times \{h\}$ for any $h\in\R$. Suppose $H$ is a constant satisfies $H>\max\left\{\|\zeta\|_\infty\right\}$, then $\Gamma_H$ is a straight line lies above $\Gamma$. Define the domains  by
\begin{equation*}
\Omega:=\left\{(x_1,x_2):\,x_2>\zeta(x_1)\right\},\quad\Omega_H:=\left\{(x_1,x_2):\,\zeta(x_1)<x_2<H\right\}.
\end{equation*}

Given an incident field $u^i$ that satisfies the Helmholtz equation 
\begin{equation*}
\Delta u^i+k^2 u^i=0 \text{ in } \Omega,
\end{equation*}
it is scattered by the impenetrable surface $\Gamma$. Then the total field $u$  satisfies the Helmholtz equation in $\Omega$ as well, i.e.,
\begin{equation}
\Delta u+k^2 u=0\text{  in }\Omega.
\end{equation}
Assume that the total field $u$ also satisfies the homogeneous Dirichlet boundary condition on the  boundary $\Gamma$, i.e.,
\begin{equation}
 u=0\text{ on }\Gamma.
\end{equation}

\begin{remark}
In this paper, only the Dirichlet boundary condition is considered. However, it is possible to treat problems with different conditions, such as the impedance boundary condition (see \cite{Lechl2016}) or inhomogeneous mediums. 
\end{remark}

The scattered field $u^s:=u-u^i$ satisfies the so-called angular spectrum representation as a radiation condition (see \cite{Chand2005}), i.e.,
\begin{equation}\label{eq:ASR}
u^s(x_1,x_2)=\frac{1}{2\pi}\int_{\R} e^{\i x_1\cdot{\xi}+\i\sqrt{k^2-|{\xi}|^2}(x_2-H)}\widehat{u^s}({\xi},H)\d{\xi},\quad x_2\geq H,
\end{equation}
where $\widehat{u}^s$ is the Fourier transform of $u^s$ on $\Gamma_H$, and $\sqrt{k^2-|{\xi}|^2}=\i\sqrt{|\xi|^2-k^2}$ when $|\xi|>k$. The radiation condition \eqref{eq:ASR} is equivalent to the following boundary condition 
\begin{equation*}
\frac{\partial u^s}{\partial x_2}(x_1,H)=T^+\left[u^s\big|_{\Gamma_H}\right],\quad\text{ on } \Gamma_H,
\end{equation*}
where $T^+$ is the Dirichlet-to-Neumann map defined by
\begin{equation}\label{eq:DtN}
T^+\phi=\frac{\i}{{2\pi}}\int_{\R} \sqrt{k^2-|{\xi}|^s}e^{\i x_1\cdot{\xi}}\widehat{\phi}({\xi})\d{\xi}\quad\text{ for }\phi=\frac{1}{2\pi}\int_{\R} e^{\i x_1\cdot{\xi}}\widehat{\phi}({\xi})\d{\xi}.
\end{equation}
$T^+$ is a continuous operator from $H_r^{1/2}(\Gamma_H)$ into $H_r^{-1/2}(\Gamma_H)$ for any $|r|<1$ (see \cite{Chand2010}).
Thus the total field satisfies the boundary condition on $\Gamma$
\begin{equation}\label{eq:boundary_condition}
\frac{\partial u}{\partial x_2}(x_1,H)=T^+\left[u\big|_{\Gamma_H}\right]+f,\quad \text{ where }f:=\frac{\partial u^i}{\partial x_2}(x_1,H)-T^+\left[u^i|_{\Gamma_H}\right].
\end{equation}
The scattering problem is now turned into a problem that defined on  the domain $\Omega_H$ with a finite height. 
The weak formulation for the scattering problem is, given any $f\in H^{-1/2}(\Gamma_H)$, to find a solution $u\in\widetilde{H}^1(\Omega_H)$ such that
\begin{equation}\label{eq:var_origional}
\int_{\Omega_H}\left[\nabla u\cdot\nabla\overline{v}-k^2u\overline{v}\right]\d x-\int_{\Gamma_H}T^+\left[u|_{\Gamma_H}\right]\overline{v}\d s=\int_{\Gamma_H}f\overline{v}\d s,
\end{equation}
for all $v\in\widetilde{H}^1(\Omega_H)$ with compact support in $\overline{\Omega_H}$. The variational problem could also be analysed in the weighted Sobolev space $\widetilde{H}_r^1(\Omega_H)$.
\begin{remark}
The tilde in $\widetilde{H}_r^1(\Omega_H)$ shows that the functions in this space belong to $H_r^1(\Omega_H)$ and satisfy homogeneous Dirichlet boundary condition on $\Gamma$. Similar notations are utilized for other spaces, e.g., $H_0^r(\Wast;\widetilde{H}_{\alpha}^s(D^\Lambda_H))$.
\end{remark}

From \cite{Chand2010}, the unique solubility of the variational problem \ref{eq:var_origional} has been proved in weighted Sobolev spaces.
\begin{theorem}\label{th:solv}
If $\Gamma$ is Lipschitz continuous, $f\in H_r^{-1/2}(\Gamma_H)$ for $|r|<1$, then there is a unique solution $u\in\widetilde{H}_r^1(\Omega_H)$ for the variational problem \eqref{eq:var_origional}.
\end{theorem}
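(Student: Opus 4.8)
```latex
The plan is to establish unique solvability via the standard variational route:
prove that the sesquilinear form associated to \eqref{eq:var_origional} satisfies an
inf-sup (or G\aa rding-type plus injectivity) condition on the weighted space
$\widetilde{H}_r^1(\Omega_H)$, and then invoke a Babu\v{s}ka--Lax--Milgram argument.
Since this result is quoted directly from \cite{Chand2010}, I would structure the
argument to mirror that reference. First I would rewrite the problem as
$a(u,v)=F(v)$, where
\begin{equation*}
a(u,v):=\int_{\Omega_H}\left[\nabla u\cdot\nabla\ol{v}-k^2 u\ol{v}\right]\d x
-\int_{\Gamma_H}T^+\left[u|_{\Gamma_H}\right]\ol{v}\d s,\qquad
F(v):=\int_{\Gamma_H}f\ol{v}\d s,
\end{equation*}
and verify that $a$ is bounded on $\widetilde{H}_r^1(\Omega_H)\times\widetilde{H}_{-r}^1(\Omega_H)$
and $F$ is a bounded antilinear functional when $f\in H_r^{-1/2}(\Gamma_H)$; boundedness
of $a$ rests on the continuity of the Dirichlet-to-Neumann map $T^+\colon
H_r^{1/2}(\Gamma_H)\to H_r^{-1/2}(\Gamma_H)$ stated above for $|r|<1$, together with the
trace theorem on $\Gamma_H$.

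The core of the proof is the coercivity/stability estimate. For the case $r=0$ one
typically combines a G\aa rding inequality --- obtained by decomposing $T^+$ into a part
with the correct sign of the real/imaginary component and a compact remainder --- with a
uniqueness argument: any solution of the homogeneous problem must vanish, which follows
from the radiation condition \eqref{eq:ASR} and a unique continuation / Rellich-type
identity forcing $\widehat{u^s}\equiv 0$ on $\Gamma_H$. For the weighted case
$|r|<1$, the key step is to control the commutator between the weight
$(1+x_1^2)^{r/2}$ and both the gradient term and the nonlocal operator $T^+$. Multiplying
the equation by the weighted test function introduces lower-order terms whose $\Omega_H$-norm
is bounded by $|r|$ times the $\widetilde{H}^1$-norm of $u$; for $|r|<1$ these terms are
genuinely absorbable, which is exactly why the range $|r|<1$ appears in the hypothesis. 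I
would make this precise by showing the weighted form is a relatively-bounded perturbation
of the unweighted one and then transferring the inf-sup condition.

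The hard part will be handling the weight in the presence of the nonlocal operator $T^+$,
which acts as a Fourier multiplier with symbol $\i\sqrt{k^2-|\xi|^2}$: the weight
$(1+x_1^2)^{r/2}$ does not commute with $T^+$, so the commutator $[T^+,(1+x_1^2)^{r/2}]$
must be shown to map $H_r^{1/2}$ into $H_r^{-1/2}$ with a norm controlled by $|r|$, and
this is where the restriction $|r|<1$ is genuinely used to keep the branch-point behaviour
of the symbol at $|\xi|=k$ under control. Once the weighted stability estimate is in hand,
uniqueness in $\widetilde{H}_r^1(\Omega_H)$ follows from the $r=0$ uniqueness together with
an approximation/density argument, and existence plus the a priori bound then yield the
unique solution $u\in\widetilde{H}_r^1(\Omega_H)$ depending continuously on $f$. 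Since the
full weighted analysis is carried out in \cite{Chand2010}, in this paper I would state the
result and cite that reference for the detailed commutator estimates rather than reproduce
them.
```
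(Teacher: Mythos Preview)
Your proposal is reasonable as a sketch of the argument in \cite{Chand2010}, and your closing remark is exactly what the paper does: Theorem~\ref{th:solv} is stated without proof and attributed entirely to \cite{Chand2010}. The paper provides no argument of its own, so there is nothing to compare your outline against beyond noting that both you and the author defer to the same reference.
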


\section{The Bloch transform and the scattering problems}

In this section, we  apply the Bloch transform to  the scattering problems. As the Bloch transform only works on functions defined in periodic domains, the first step is to transform the problem into one defined in a periodic domain, and then apply the Bloch transform to the new problem (see  \cite{Lechl2016,Lechl2016b}). In this paper,  the procedure will be altered slightly, i.e., to transform the scattering problem into the new one defined in the infinite rectangle $D_H:=\R\times[h_0,H]$ where  $0<h_0,\,\|\zeta\|_\infty<H$.  Let $\Gamma^\Lambda_H$ and $D^\Lambda_H$ be  $\Gamma_H$ and $D_H$ restricted in one periodic cell $\W\times\R$, i.e.,
\begin{equation*}
\Gamma^\Lambda_H=\Gamma_H\cap\left[\W\times\R\right],\quad D^\Lambda_H=D_H\cap\left[\W\times\R\right].
\end{equation*} 

\begin{figure}[H]
\centering
\includegraphics[width=15cm]{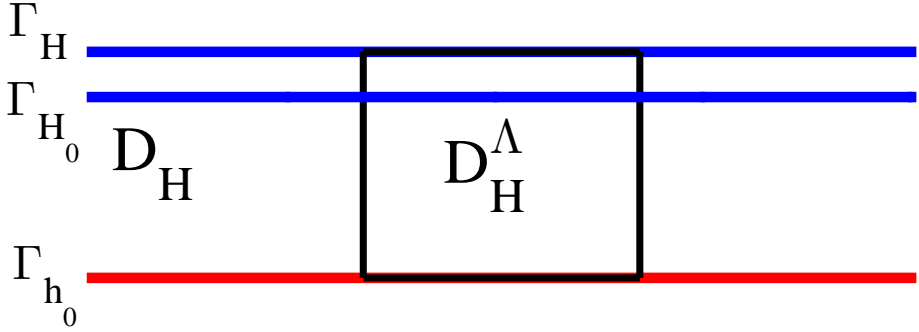}
\caption{Strip $D_H$ and one periodic cell $D^\Lambda_H$.}
\end{figure}

 Let $\Theta$ be a diffeomorphism that maps $\Omega_{H_0}$ to $D_{H_0}$ for some $\|\zeta\|_\infty<H_0<H$, and extend $\Theta$ by identity in $\R\times[H_0,\infty)$. Thus the support of $\Theta-I$ is contained in $D_{H_0}$.
\begin{remark}\label{asp2}
With Assumption \ref{asp1}, we can always find a diffeomorphism $\Theta$, such that $\Theta$ and $\Theta^{-1}$ are both Lipschitz continuous in $\Omega_H$. An example for the definition of $\Theta$ is
\begin{equation}\label{eq:Phi_p}
\Theta:\,{ x}\mapsto\left(x_1,x_2+\frac{(H_0-x_2)^3}{(H_0-h_0)^3}\left(\zeta(x_1)-h_0\right)\right)\quad\text{ when }x\in D_{H_0},
\end{equation}
and then extend it by the identity operator $I$ when $x_2\geq H_0$.
\end{remark}

 Let the transformed total field $u_T:=u\,\circ\,\Theta$,  then by direct calculation, $u_T\in\widetilde{H}_r^1(D_H)$   satisfies the following variational problem in the periodic domain $D_H$
\begin{equation}\label{eq:var_T}
\int_{D_H}\left[A\nabla u_T\cdot\nabla\overline{v_T}-k^2c u_T\overline{v_T}\right]\d { x}-\int_{\Gamma_H}T^+\left[u_T|_{\Gamma_H}\right]\overline{v_T}\d s=\int_{\Gamma_H}f\overline{v_T}\d s,
\end{equation}
for all $v_T:=v\circ \Theta\in\widetilde{H}^1(D_H)$, 
\begin{eqnarray*}
&& A_\Theta({x}):=\left|\det\grad\Theta({ x})\right|\left[\left(\grad\Theta({ x})\right)^{-1}\left(\grad\Theta({ x})\right)^{-T}\right]\in L^\infty\left(\Omega_H,\R^{2\times 2}\right),\\
&& c_\Theta({ x}):=\left|\det\grad\Theta({ x})\right|\in L^\infty (\Omega_H).
\end{eqnarray*}
Thus the support of both $A_\Theta$ and $c_\Theta$ are subsets of $D_{H_0}$.
\begin{remark}
From the definitions of $A_\Theta$ and $c_\Theta$, if $\zeta$ has higher regularities, e.g., if $\zeta\in C^{n,1}(\R)$ for some positive integer $n\geq 1$, $A_\Theta$ and $c_\Theta$ will have higher regularities as well, i.e., $A_\Theta\in W^{n-1,\infty}(\Omega_H,\R^{2\times 2})$ and $c_\Theta\in W^{n-1,\infty}(\Omega_H)$. Moreover, there is a constant $C>0$ depends only on $\zeta$ such that 
\begin{equation}\label{eq:dependence}
\|A_\Theta\|_{ W^{n-1,\infty}(\Omega_H,\R^{2\times 2})},\,\|c_\Theta\|_{W^{n-1,\infty}(\Omega_H)}\leq C\|\zeta\|_{W^{n,\infty}(\R)}
\end{equation}
\end{remark}

From direct calculations, we rewrite the variational problem \eqref{eq:var_T} as
\begin{equation*}
\begin{aligned}
&\int_{D_H}\left[\nabla u_T\cdot\nabla\overline{v_T}-k^2u_T\overline{v_T}\right]\d { x}-\int_{\Gamma_H}T^+\left[u_T|_{\Gamma_H}\right]\overline{v_T}\d s\\
&\qquad+\int_{D_H}\left[(A_\Theta-I)\nabla u_T\cdot\nabla\overline{v_T}-k^2(c_\Theta-1)u_T\overline{v_T}\right]\d { x}
=\int_{\Gamma_H}f\overline{v_T}\d s
\end{aligned}.
\end{equation*}

Use the property of the Bloch transform, let $w_B=\J_{D_H} u_T$, $v_B=\overline{\J_{D_H} \overline{v}}$, then $w_B\in H_0^r(\Wast;\widetilde{H}_\alpha^1(D^\Lambda_H))$, the variational form is equivalent to
\begin{equation*}
\begin{aligned}
\int_\Wast a_{\alpha}(w_B({\alpha},\cdot),v_B({\alpha},\cdot))\d{\alpha}+\int_{D_H} \left[(A_\Theta-I)\grad u_T\cdot\grad\overline{v}-k^2 (c_\Theta-1)u_T\overline{v}\right]\d{ x}\\
=\int_\Wast\int_{\Gamma^\Lambda_H}\left[\J_{\Gamma_H} f\right](\alpha,x) \overline{v(\alpha,x)}\d{ s(x)}\d\alpha,
\end{aligned},
\end{equation*}
for any $v_B\in H_0^{-r}(\Wast;H^1_{\alpha}(D^\Lambda_H))$ with compact support, where
\begin{equation*}
 a_{\alpha}(w,v):=\int_{D^\Lambda_H}\left[\nabla w\cdot\nabla \overline{v}-k^2 w\overline{v}\right]\d{ x}-\int_{\Gamma^\Lambda_H}T^+_{\alpha}(w)\overline{v}\d s \text{ for } w,v\in H^1_\alpha(D^\Lambda_H)
\end{equation*}
and $T^+_{\alpha}$ is the ${\alpha}$-quasi-periodic Dirichlet-to-Neumann operator defined by
\begin{equation*}
T_{\alpha}^+\phi=\i\sum_{{ j}\in\Z}\sqrt{k^2-|\Lambda^* j-{\alpha}|^2}\widehat{\phi}({ j})e^{\i(\Lambda^*{ j}-{\alpha})\cdot x_1}\quad\text{ for }\phi=\sum_{{ j}\in\Z}\widehat{\phi}({ j})e^{\i(\Lambda^*{ j}-{\alpha})\cdot x_1}.
\end{equation*}

Define $c:=c_\Theta-1$ and $A:=A_\Theta-I$, then both of them are bounded due to the boundedness of $\zeta$ in $W^{1,\infty}(\R)$ (see \eqref{eq:dependence}). As $u_T\in H_r^1(D_H)$ and $v=\overline{\J^{-1}_{D_H} \overline{v_B}}\in H_{-r}^1(D_H)$, the integrals $\int_{D_H}A\grad u_T\cdot\grad\overline{v}\d{ x}$ and $\int_{D_H}c u_T\overline{v}\d { x}$ are well defined and bounded by
\begin{equation*}
\left|\int_{D_H}A\grad u_T\cdot\grad\overline{v}\d{ x}\right|,\, \left|\int_{D_H}c u_T\overline{v}\d { x}\right|\leq C\|\zeta\|_{1,\infty}\left\|u_T\right\|_{H_r^1(D_H)}\|v\|_{H_{-r}^1(D_H)}.
\end{equation*}
For the mapping property of the Bloch transform (see Appendix),
\begin{equation*}
\left|\int_{D_H}A\grad u_T\cdot\grad\overline{v}\d{ x}\right|,\, \left|\int_{D_H}c u_T\overline{v}\d { x}\right|\leq  C\|\zeta\|_{1,\infty}\left\|w_B\right\|_{H_0^r(\Wast;H^1_\alpha(D_H))}\|v_B\|_{H_0^{-r}(\Wast;H^1_\alpha(D_H))}.
\end{equation*}
Define the sesquilinear form $b(\cdot,\cdot)$ 
\begin{equation*}
b(w,v)=\int_{D_H}\Big[ A\grad( \J_{D_H}^{-1} w)\cdot\grad\overline{\left(\J_{D_H}^{-1} \overline{v}\right)}-k^2 c(\J_{D_H}^{-1}w)\cdot\overline{\left(\J_{D_H}^{-1} \overline{v}\right)}\Big]\d x,
\end{equation*}
then it is a bounded  on $H_0^r(\Wast;\widetilde{H}_\alpha^1(D^\Lambda_H))\times H_0^{-r}(\Wast;\widetilde{H}_\alpha^1(D^\Lambda_H))$.

Finally we arrive at the variational formulation, i.e., for any $v_B\in L^2(\Wast;\widetilde{H}_\alpha^1(D^\Lambda_H))$, $w_B$ satisfies
\begin{equation}\label{eq:var_Bloch}
\int_\Wast a_\alpha(w_B(\alpha,\cdot),v_B(\alpha,\cdot))\d\alpha+\, b(w_B,v_B)=\int_\Wast\int_{\Gamma^\Lambda_H}F(\alpha,x) \overline{v(\alpha,x)}\d{ s(x)}\d\alpha,
\end{equation}
where 
\begin{equation*}
F(\alpha,x)=\left(\J_{\Gamma_H} f\right)(\alpha,x)\in H_0^r(\Wast;H_\alpha^{-1/2}(\Gamma^\Lambda_H)).
\end{equation*}

From the arguments above,  the equivalence between the weak formulation \eqref{eq:var_origional} of the scattering problem and the variational problem \eqref{eq:var_Bloch} is concluded in the following lemma.

\begin{lemma}\label{th:equivalent}
Assume that $f\in H_r^{-1/2}(\Gamma_H)$ for some $r\in[0,1)$, then $u\in \widetilde{H}_r^1(D_H)$ satisfies \eqref{eq:var_origional} if and only if $w_B=\J_{D_H} u_T\in H_0^r(\Wast;\widetilde{H}_\alpha^1(D^\Lambda_H))$  satisfies \eqref{eq:var_Bloch} for $F\in H_0^r(\Wast;H_\alpha^{-1/2}(\Gamma^\Lambda_H))$.
\end{lemma}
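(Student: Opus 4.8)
The plan is to read Lemma~\ref{th:equivalent} as the composition of two successive changes of unknown, each a bounded bijection that carries one variational problem \emph{exactly} onto the next, so that the two-sided implication follows by chaining the equivalences. The first change is the diffeomorphism $\Theta$, which turns \eqref{eq:var_origional} on $\Omega_H$ into \eqref{eq:var_T} on the strip $D_H$; the second is the Bloch transform $\J_{D_H}$, which turns (the rewritten form of) \eqref{eq:var_T} into \eqref{eq:var_Bloch}. I would keep track of both directions throughout, since the assertion is an ``if and only if'' and each step is an isomorphism, so the converse is obtained simply by running the maps backwards.

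For the first step I would use that $\Theta$ is bi-Lipschitz and equals the identity on $\R\times[H_0,\infty)$, so that $u\mapsto u_T:=u\circ\Theta$ is a bounded bijection from $\widetilde H_r^1(\Omega_H)$ onto $\widetilde H_r^1(D_H)$ with bounded inverse, preserving the homogeneous Dirichlet condition on $\Gamma$. The chain rule then produces exactly the coefficients $A_\Theta,c_\Theta$ in the volume integral, while the surface terms over $\Gamma_H$ and the data $f$ are left unchanged because $\Theta=I$ near $\Gamma_H$. Testing against $v_T=v\circ\Theta$, as $v$ ranges over the compactly supported test functions in $\widetilde H^1(\Omega_H)$ (compact support being preserved under composition with the proper map $\Theta$), gives \eqref{eq:var_origional}$\iff$\eqref{eq:var_T}. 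Splitting $A_\Theta=I+A$ and $c_\Theta=1+c$ as in the text then isolates a translation-invariant principal part from the perturbation.

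For the second step I would invoke the mapping property recalled in the Appendix: $\J_{D_H}$ is an isomorphism from $H_r^1(D_H)$ onto $H_0^r(\Wast;\widetilde H_\alpha^1(D^\Lambda_H))$ that commutes with the Dirichlet trace on $\Gamma$, so $u_T$ corresponds bijectively to $w_B=\J_{D_H}u_T$. Because the principal part is periodic, the Bloch transform diagonalises it fibrewise: $T^+$ becomes the quasi-periodic operator $T^+_\alpha$, and a Parseval identity rewrites the principal sesquilinear pairing as $\int_\Wast a_\alpha(w_B(\alpha,\cdot),v_B(\alpha,\cdot))\d\alpha$ with $v_B=\overline{\J_{D_H}\overline v}$; the perturbation part becomes $b(w_B,v_B)$ by the definition of $b$, and Parseval on $\Gamma_H$ turns the right-hand side into the stated pairing with $F=\J_{\Gamma_H}f$. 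This proves \eqref{eq:var_Bloch} first for the images of the compactly supported test functions, which are dense in $L^2(\Wast;\widetilde H_\alpha^1(D^\Lambda_H))$; since every term of \eqref{eq:var_Bloch} is continuous in $v_B$ for the $L^2(\Wast;H_\alpha^1)$ topology (using $H_0^0\subset H_0^{-r}$ for $r\ge0$, so that $b$ stays bounded on the admissible class), the identity extends to all $v_B$.

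The step I expect to be genuinely delicate is the control of the perturbation term $b$. Unlike the locally perturbed case, the coefficients $A=A_\Theta-I$ and $c=c_\Theta-1$ are supported in the \emph{infinite} strip $D_{H_0}=\R\times[h_0,H_0]$ rather than in a bounded set, so $b$ does not split across the fibres $\alpha$ and is intrinsically nonlocal in the quasi-periodicity parameter. The crux is therefore that the physical-space integrals $\int_{D_H}A\grad u_T\cdot\grad\overline v\d x$ and $\int_{D_H}c\,u_T\overline v\d x$ converge and are bounded for $u_T\in H_r^1(D_H)$ and $v\in H_{-r}^1(D_H)$; this is where the uniform $W^{1,\infty}$ bound on $\zeta$ entering through \eqref{eq:dependence} and the dual weights $r,-r$ are indispensable, and where the mapping property of $\J_{D_H}$ must be used to transfer the estimate to $H_0^r\times H_0^{-r}$. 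All of this boundedness has already been verified in the displays immediately preceding the lemma, so inside the proof it is quoted rather than re-derived; the remaining work is the bookkeeping of the two bijections together with the density argument for the test space.
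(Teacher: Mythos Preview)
Your proposal is correct and follows essentially the same route as the paper: the lemma is stated in the paper as a summary of the derivation immediately preceding it (the diffeomorphism $\Theta$ followed by the Bloch transform $\J_{D_H}$, with the boundedness of $b(\cdot,\cdot)$ checked via the dual-weight estimate and the mapping property of $\J_{D_H}$). Your additional density argument for extending from images of compactly supported test functions to all of $L^2(\Wast;\widetilde H_\alpha^1(D^\Lambda_H))$ is a welcome clarification that the paper leaves implicit, but it does not change the overall strategy.
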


With the equivalence between \eqref{eq:var_origional} and \eqref{eq:var_Bloch} in Lemma \ref{th:equivalent}, we will show the unique solvability of the variational problem \eqref{eq:var_Bloch}.

\begin{theorem}
Suppose $\zeta$ is Lipschitz continuous. Given any $F\in H_0^r(\Wast;\widetilde{H}_\alpha^{-1/2}(\Gamma_H^\Lambda))$ for some $r\in[0,1)$,  the variational problem \eqref{eq:var_Bloch} has a unique solution in $H_0^r(\Wast;\widetilde{H}^1_\alpha(D^\Lambda_H))$.
\end{theorem}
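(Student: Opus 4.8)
The plan is to obtain the unique solvability of the Bloch-transformed problem \eqref{eq:var_Bloch} as a direct consequence of the well-posedness of the original rough-surface problem, exploiting the equivalence already recorded in Lemma \ref{th:equivalent} together with the isomorphism property of the Bloch transform on the weighted scale of spaces. No fresh variational analysis (Lax--Milgram or Fredholm) is needed on the $\alpha$-fibre side, because all the genuine functional-analytic content has been absorbed into Lemma \ref{th:equivalent} and into the mapping properties of $\J$ collected in the Appendix.

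First I would recall from the Appendix that $\J_{D_H}$ is an isomorphism from $\widetilde{H}_r^1(D_H)$ onto $H_0^r(\Wast;\widetilde{H}_\alpha^1(D^\Lambda_H))$, and that $\J_{\Gamma_H}$ is an isomorphism from $H_r^{-1/2}(\Gamma_H)$ onto $H_0^r(\Wast;\widetilde{H}_\alpha^{-1/2}(\Gamma_H^\Lambda))$. In particular, given the datum $F\in H_0^r(\Wast;\widetilde{H}_\alpha^{-1/2}(\Gamma_H^\Lambda))$ there is a unique $f:=\J_{\Gamma_H}^{-1}F\in H_r^{-1/2}(\Gamma_H)$, and this $f$ is exactly the right-hand side appearing in the original variational problem \eqref{eq:var_origional}.

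For existence I would invoke Theorem \ref{th:solv}: since $r\in[0,1)$ and $\zeta$ is Lipschitz continuous, \eqref{eq:var_origional} with datum $f$ has a unique solution $u\in\widetilde{H}_r^1(\Omega_H)$; transporting by the diffeomorphism $\Theta$ gives $u_T=u\circ\Theta\in\widetilde{H}_r^1(D_H)$, and $w_B:=\J_{D_H}u_T$ is then an element of $H_0^r(\Wast;\widetilde{H}_\alpha^1(D^\Lambda_H))$. The ``only if'' direction of Lemma \ref{th:equivalent} shows that this $w_B$ solves \eqref{eq:var_Bloch} with right-hand side $F$. For uniqueness, suppose $w_B$ solves the homogeneous problem, i.e.\ \eqref{eq:var_Bloch} with $F=0$. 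The ``if'' direction of Lemma \ref{th:equivalent} produces a corresponding $u\in\widetilde{H}_r^1(\Omega_H)$ solving \eqref{eq:var_origional} with $f=0$; the uniqueness part of Theorem \ref{th:solv} forces $u=0$, hence $u_T=0$, and the injectivity of $\J_{D_H}$ gives $w_B=0$. Combining the two parts completes the argument.

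The step I expect to demand the most care is ensuring that both directions of the equivalence in Lemma \ref{th:equivalent} hold on the weighted spaces for the full range $r\in[0,1)$, and not merely in the unweighted case $r=0$. This hinges on two facts already prepared above: the boundedness of the perturbation form $b(\cdot,\cdot)$ on $H_0^r(\Wast;\widetilde{H}_\alpha^1(D^\Lambda_H))\times H_0^{-r}(\Wast;\widetilde{H}_\alpha^1(D^\Lambda_H))$, which is where the non-compact support of $A_\Theta-I$ and $c_\Theta-1$ is controlled through $\|\zeta\|_{1,\infty}$, and the fact that the Bloch transform intertwines the weighted norm on $D_H$ with the fibre-wise norms on $D^\Lambda_H$. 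Once these mapping properties are in force, the reduction to Theorem \ref{th:solv} is routine and the claim follows.
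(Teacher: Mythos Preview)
Your proposal is correct and follows essentially the same route as the paper: pull $F$ back via $\J_{\Gamma_H}^{-1}$ to obtain $f$, invoke Theorem~\ref{th:solv} for existence of $u$, push forward through $\Theta$ and $\J_{D_H}$, and use Lemma~\ref{th:equivalent} in both directions to transfer existence and uniqueness. The paper's proof is slightly terser but structurally identical; your additional remarks on the weighted-space mapping properties are accurate and simply make explicit what the paper takes as already established.
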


\begin{proof}

The first step is to prove the existence of the solution of \eqref{eq:var_Bloch}. Given a function $F\in H_0^r(\Wast;{H}_\alpha^{-1/2}(\Gamma_H^\Lambda))$, then $f:=\J_{\Gamma_H}^{-1}F\in H_r^{-1/2}(\Gamma_H)$. As $\Gamma$ is Lipschitz continuous, from Theorem \ref{th:solv}, there is a unique solution $u\in \widetilde{H}^1_r(\Omega_H)$ to the problem \eqref{eq:var_origional}.  From Lemma \ref{th:equivalent}, $w_B=\J_{D_H} u_T\in H_0^r(\Wast;\widetilde{H}^1_\alpha(D^\Lambda_H))$ is a solution to the variational problem \eqref{eq:var_Bloch}. 

Then let's prove the uniqueness of the solution. 
Suppose $w_B\in H_0^r(\Wast;\widetilde{H}^1_\alpha(D^\Lambda_H))$ is a solution to the problem \eqref{eq:var_Bloch} with  $F=0$, then $u_T:=\J_{D_H}^{-1}w_B$ is a solution to the problem \eqref{eq:var_origional} with $f=0$.  From the unique solvability of $\eqref{eq:var_origional}$, $u=0$, thus $w_B=0$.  The proof is finished.
\end{proof}

Theorem 7 and Theorem 8 in \cite{Lechl2016b} could be extended to the rough surface scattering problems. With the assumptions the incident fields or the surfaces have higher regularities, the Bloch transformed fields are also smoother.  

\begin{theorem}\label{th:higher_reg}
Assume that $f\in H_r^{-1/2}(\Gamma_H)$ for some $r\in[0,1)$, and $\zeta\in C^{2,1}(\R)$. Then the solution $w_B\in H_0^r(\Wast;\widetilde{H}^2_\alpha(D^\Lambda_H))$ and $u_T=\J_{D_H}^{-1} w_B\in\widetilde{H}^2_r(\Omega_H)$.
\end{theorem}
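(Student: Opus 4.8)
The plan is to reduce the claim to an elliptic regularity statement for the transformed problem \eqref{eq:var_T} on the flat strip $D_H$, and then to transport that regularity to $w_B$ through the Bloch transform. By the equivalence of Lemma \ref{th:equivalent} it is enough to show $u_T\in\widetilde{H}_r^2(D_H)$, because the boundedness of $\J_{D_H}\colon\widetilde{H}_r^2(D_H)\to H_0^r(\Wast;\widetilde{H}_\alpha^2(D^\Lambda_H))$ (Appendix) then delivers $w_B=\J_{D_H}u_T$ in the asserted space; an advantage of working on \eqref{eq:var_T} rather than on \eqref{eq:var_Bloch} is that the $\alpha$-coupling term $b(\cdot,\cdot)$ never appears. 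The hypothesis $\zeta\in C^{2,1}(\R)$ is used through \eqref{eq:dependence} with $n=2$, which upgrades the coefficients to $A_\Theta,c_\Theta\in W^{1,\infty}(\Omega_H)$; this Lipschitz regularity is precisely what the passage $H^1\to H^2$ needs.

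First I would establish interior and lower-boundary regularity by the tangential difference-quotient method. Since $D_H=\R\times[h_0,H]$ is a flat strip, the Dirichlet boundary (the image $\Gamma_{h_0}$ of $\Gamma$ under $\Theta$) and the artificial boundary $\Gamma_H$ are both horizontal, so the difference quotient $\tau_h\phi(x)=\bigl(\phi(x+h\mathbf{e}_1)-\phi(x)\bigr)/h$ preserves $\widetilde{H}_r^1(D_H)$ and the homogeneous Dirichlet condition. Testing \eqref{eq:var_T} with $\tau_{-h}\tau_h u_T$ and exploiting the uniform ellipticity of $A_\Theta$ together with $\|\tau_h A_\Theta\|_\infty\le\|A_\Theta\|_{W^{1,\infty}}$ bounds $\|\tau_h\grad u_T\|_{L^2}$ uniformly in $h$; letting $h\to0$ gives $\partial_1 u_T\in H_r^1(D_H)$. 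The nonlocal term is harmless here because $T^+$ acts as a Fourier multiplier in $x_1$ and hence commutes with $\tau_h$, while its G\r{a}rding structure controls the resulting boundary contribution.

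Next I would recover the missing normal derivative algebraically from the strong form
\[
-\div\!\bigl(A_\Theta\grad u_T\bigr)-k^2 c_\Theta u_T=0\ \text{ in }D_H,\qquad \partial_2 u_T=T^+\bigl[u_T|_{\Gamma_H}\bigr]+f\ \text{ on }\Gamma_H .
\]
Uniform ellipticity lets me solve for $\partial_2^2 u_T$ in terms of the already-controlled quantities $\partial_1^2 u_T,\ \partial_1\partial_2 u_T,\ \grad u_T$ and the $W^{1,\infty}$ coefficients, placing $\partial_2^2 u_T\in L_r^2$ and hence $u_T\in\widetilde{H}_r^2(D_H)$. Near the top I would use that the support of $\Theta-I$ lies in $D_{H_0}$ with $H_0<H$, so that on $\R\times[H_0,H]$ the operator reduces to the constant-coefficient Helmholtz equation and the boundary relation above becomes exact with $T^+\colon H_r^{3/2}(\Gamma_H)\to H_r^{1/2}(\Gamma_H)$.

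I expect the artificial boundary $\Gamma_H$ to be the delicate step: there the extra derivative is produced by the order-one smoothing of the Dirichlet-to-Neumann map rather than by the surface regularity, so one must check that the normal trace $\partial_2 u_T|_{\Gamma_H}$ and the datum $f$ furnish the matching half derivative, exactly as in Theorems 7--8 of \cite{Lechl2016b}; by contrast the interior and Dirichlet estimates are routine once $A_\Theta,c_\Theta\in W^{1,\infty}$. Finally, since $\zeta\in C^{2,1}$ makes $\Theta$ a $C^{2,1}$ diffeomorphism, the change of variables preserves $H^2$ regularity, so $u=u_T\circ\Theta^{-1}\in\widetilde{H}_r^2(\Omega_H)$, and applying $\J_{D_H}$ as above completes the argument.
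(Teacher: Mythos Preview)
The paper does not actually prove this theorem: it only remarks that ``Theorem 7 and Theorem 8 in \cite{Lechl2016b} could be extended to the rough surface scattering problems'' and then states the result without argument. So there is no explicit proof in the paper for you to be compared against; the implicit claim is that the elliptic regularity machinery of \cite{Lechl2016b} carries over once the problem has been flattened to the strip $D_H$.

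Your outline is precisely that machinery: tangential difference quotients on the flat strip to control $\partial_1\nabla u_T$ (using $A_\Theta,c_\Theta\in W^{1,\infty}$ from \eqref{eq:dependence} with $n=2$, and the fact that the Fourier-multiplier $T^+$ commutes with horizontal translations), algebraic recovery of $\partial_2^2 u_T$ from the strong form via uniform ellipticity, and finally transport to $w_B$ through the isomorphism $\J_{D_H}\colon H_r^2(D_H)\to H_0^r(\Wast;H_\alpha^2(D_H^\Lambda))$ of Theorem~\ref{th:Bloch_property}. This is the expected route and matches what the cited reference does in the locally perturbed case.

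One caveat worth recording: the theorem as stated assumes only $f\in H_r^{-1/2}(\Gamma_H)$, but global $H^2$-regularity up to $\Gamma_H$ needs the Neumann datum to lie in $H_r^{1/2}(\Gamma_H)$, since the bootstrap $u|_{\Gamma_H}\in H_r^{3/2}\Rightarrow T^+[u|_{\Gamma_H}]\in H_r^{1/2}$ still leaves $f$ to supply the missing half derivative. You correctly flag $\Gamma_H$ as the delicate step. The paper itself tacitly upgrades the hypothesis when it \emph{uses} this regularity (Theorem~\ref{th:err_approx} assumes $f\in H_r^{1/2}(\Gamma_H)$), so the discrepancy is in the statement rather than in your argument; with $f\in H_r^{1/2}(\Gamma_H)$ your sketch goes through, while with only $f\in H_r^{-1/2}(\Gamma_H)$ one gets $H^2$ away from $\Gamma_H$ but not up to it.
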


When $u^i$ decays fast enough at the infinity, i.e., $u^i\in H_r^1(D_H)$ for $r\in(1/2,1)$, the Bloch transform $\J_{D_H}u^i\in H_0^r(\Wast;H^1_\alpha(D^\Lambda_H))$ depends continuously on $\alpha$, then the Bloch transformed field $w_B\in H_0^r(\Wast;\widetilde{H}_\alpha^1(D^\Lambda))$. Then the following equivalent formulation holds.

\begin{theorem}
If $f\in H_r^{-1/2}(\Omega_H)$ for some $r\in(1/2,1)$, then the solution $w_B\in H_0^r(\Wast;\widetilde{H}^1_\alpha(D^\Lambda_H))$ equivalently satisfies for all $\alpha\in\Wast$ and $v_\alpha\in\widetilde{H}_\alpha^1(D^\Lambda_H)$ such that
\begin{equation}\label{eq:var_Bloch_continuous}
a_\alpha(w_B(\alpha,\cdot),v_\alpha)+ b(w_B(\alpha,\cdot),v_\alpha)=\int_{\Gamma^\Lambda_H}F(\alpha,x)\overline{v_\alpha(x)}\d s(x).
\end{equation}

\end{theorem}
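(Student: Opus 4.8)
\section*{Proof proposal}

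The plan is to establish the equivalence by disintegrating the global identity \eqref{eq:var_Bloch} into its $\alpha$–fibres, the decisive enabling fact being the hypothesis $r>1/2$. Since the Brillouin zone $\Wast$ is one–dimensional, the vector–valued Sobolev embedding $H_0^r(\Wast;X)\hookrightarrow C(\overline{\Wast};X)$ holds for any Banach space $X$ when $r>1/2$; applying it with $X=\widetilde H^1_\alpha(D^\Lambda_H)$ and with $X=\widetilde H^{-1/2}_\alpha(\Gamma^\Lambda_H)$ shows that both the solution $\alpha\mapsto w_B(\alpha,\cdot)$ and the data $\alpha\mapsto F(\alpha,\cdot)$ are continuous, so that pointwise evaluation in $\alpha$ is meaningful. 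The easy direction is $\eqref{eq:var_Bloch_continuous}\Rightarrow\eqref{eq:var_Bloch}$: one inserts $v_\alpha=v_B(\alpha,\cdot)$ for an arbitrary test field $v_B$ and integrates over $\Wast$. The substantive direction is the converse.

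The main obstacle is the coupling term $b$. Because $A=A_\Theta-I$ and $c=c_\Theta-1$ are not $\Lambda$–periodic (the rough surface is only bounded, not periodic), multiplication by them does not commute with the fibre decomposition, and $b(w_B,v_B)$ genuinely couples different values of $\alpha$; this is precisely the difficulty absent in the locally perturbed case. I would handle it by exhibiting $b$ as a disintegrated pairing. Writing $u_T=\J_{D_H}^{-1}w_B\in\widetilde H^1_r(D_H)$ and letting $g\in H^{-1}_r(D_H)$ be the functional $\psi\mapsto\int_{D_H}[A\grad u_T\cdot\grad\overline\psi-k^2 c\,u_T\overline\psi]\d{x}$ (bounded by the estimate already recorded before \eqref{eq:var_Bloch}), the adjointness/Parseval property of the Bloch transform gives
\begin{equation*}
b(w_B,v_B)=\int_\Wast\big\langle (\J_{D_H}g)(\alpha),\,v_B(\alpha,\cdot)\big\rangle\d\alpha .
\end{equation*}
By the mapping property of $\J_{D_H}$ one has $\J_{D_H}g\in H_0^r(\Wast;\widetilde H^{-1}_\alpha(D^\Lambda_H))$, which for $r>1/2$ is continuous in $\alpha$; this continuous fibre $(\J_{D_H}g)(\alpha)$ is exactly what the symbol $b(w_B(\alpha,\cdot),v_\alpha)$ abbreviates, it depending, through $g$, on all of $w_B$ rather than on $w_B(\alpha,\cdot)$ alone.

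With this representation, define the residual $G(\alpha)\in\big(\widetilde H^1_\alpha(D^\Lambda_H)\big)'$ by
\begin{equation*}
\langle G(\alpha),v_\alpha\rangle:=a_\alpha(w_B(\alpha,\cdot),v_\alpha)+\big\langle(\J_{D_H}g)(\alpha),v_\alpha\big\rangle-\int_{\Gamma^\Lambda_H}F(\alpha,x)\overline{v_\alpha(x)}\dS(x).
\end{equation*}
Identity \eqref{eq:var_Bloch} then reads $\int_\Wast\langle G(\alpha),v_B(\alpha,\cdot)\rangle\d\alpha=0$ for all admissible $v_B$, and $\alpha\mapsto G(\alpha)$ is continuous: the first term is continuous because $w_B(\alpha,\cdot)$ is continuous and the symbol $\sqrt{k^2-|\Lambda^* j-\alpha|^2}$ defining $T^+_\alpha$ is a continuous function of $\alpha$ (only mildly non-smooth at the finitely many Wood anomalies), while continuity of the remaining two terms was just established. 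To pass to the pointwise statement I would use separated test fields $v_B(\alpha,x)=\phi(\alpha)\,e^{\i\alpha x_1}\psi(x)$, with $\phi$ a scalar cut-off and $\psi$ a $\Lambda$–periodic function vanishing on $\Gamma^\Lambda_H$; since $\psi\mapsto e^{\i\alpha x_1}\psi$ is an isomorphism of $\widetilde H^1_{\mathrm{per}}$ onto $\widetilde H^1_\alpha(D^\Lambda_H)$, these fields are admissible. Fixing $\psi$ and letting $\phi$ range over a localizing family, the vanishing of $\int_\Wast\phi(\alpha)\langle G(\alpha),e^{\i\alpha\cdot}\psi\rangle\d\alpha$ together with the continuity of the integrand forces $\langle G(\alpha),e^{\i\alpha\cdot}\psi\rangle=0$ for every $\alpha$; letting $\psi$ vary then yields $G(\alpha)=0$ for all $\alpha$, which is \eqref{eq:var_Bloch_continuous}.

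The step I expect to be delicate is the disintegration of $b$: one must verify that the Parseval identity for $\J_{D_H}$ extends from the $L^2(D_H)$ inner product to the $H^{-1}_r\times H^1_{-r}$ duality pairing, and that the weighted mapping property $\J_{D_H}:H^{-1}_r(D_H)\to H_0^r(\Wast;\widetilde H^{-1}_\alpha(D^\Lambda_H))$ holds, so that the fibre $(\J_{D_H}g)(\alpha)$ is well defined and continuous for $r>1/2$. Everything else, namely the embedding into $C(\overline\Wast;\cdot)$, the isomorphism $\widetilde H^1_{\mathrm{per}}\cong\widetilde H^1_\alpha(D^\Lambda_H)$, and the localization argument, is then routine.
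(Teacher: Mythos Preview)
The paper does not actually supply a proof of this theorem: it is stated immediately after a one-sentence motivation (that $r>1/2$ forces $\alpha\mapsto w_B(\alpha,\cdot)$ to be continuous) and an earlier remark that results from the literature on locally perturbed periodic problems ``could be extended to the rough surface scattering problems.'' Your proposal therefore goes well beyond what the paper offers and, in its overall architecture, is exactly the argument the paper is gesturing toward: the Sobolev embedding $H_0^r(\Wast;X)\hookrightarrow C(\overline\Wast;X)$ for $r>1/2$ is the decisive observation, and the disintegration-plus-localization route you lay out is the natural way to pass from the integrated identity \eqref{eq:var_Bloch} to the pointwise one.

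Two minor comments. First, a slip: your separated test fields should vanish on the bottom boundary $\Gamma^\Lambda_{h_0}$ (the image of the rough surface under $\Theta$), not on $\Gamma^\Lambda_H$; the tilde in $\widetilde H^1_\alpha(D^\Lambda_H)$ places the homogeneous Dirichlet condition there. Second, you correctly flag that the symbol $b(w_B(\alpha,\cdot),v_\alpha)$ in \eqref{eq:var_Bloch_continuous} cannot be read literally from the definition of $b$ (which acts on functions of $(\alpha,x)$ through $\J_{D_H}^{-1}$), and your reinterpretation as $\langle(\J_{D_H}g)(\alpha),v_\alpha\rangle$ is the right way to give it meaning; the paper is silent on this point, so you are filling a genuine gap in the exposition. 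The weighted duality/Parseval step you identify as delicate follows from Theorem~\ref{th:Bloch_property} in the Appendix (applied with $s=-1$) together with $\J_{D_H}^*=\J_{D_H}^{-1}$, so your outline goes through.
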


In this section, the variational formulation for the Bloch transformed total field has been established with the help of the properties of the Bloch transform. Similar to the special case, i.e., locally perturbed periodic surfaces (see \cite{Lechl2016}), the variational problem is proved to be equivalent to the original problems, and is also uniquely solvable in certain Sobolev spaces.

\begin{remark}
For the special case that the surface is a small perturbation of a periodic one, the same technique in \cite{Lechl2016} could be adopted, i.e., to transform the problem into one defined in a periodic domain. Then the Bloch transformed field is analysed in one periodic cell of the periodic domain. It might be more convenient to solve the problems numerically in this way for the special case, with the same method as the generalized cases.
\end{remark}

\section{Finite dimensional approximation of scattering from rough surfaces}

From the variational problem \eqref{eq:var_Bloch}, the main difference between the globally and locally perturbed problems is the term $b(w_B,v_B)$. When the perturbation is local, the term $b(\cdot,\cdot)$ is reduced into the integral in a bounded domain, thus it is easy to be discretized following the method introduced in \cite{Lechl2017}. While when the perturbation is global, the numerical algorithm becomes much more difficult due to the infinite domain. 

 In the first subsection, the simplified case on the real line will be studied. The extension to 2D strip $D_H$ will be presented in the second subsection. The investigation of the finite-dimensional approximated field will be carried out in the third subsection.

\subsection{The simplified case: one dimensional problems}

In this subsection, we will consider the following integral
\begin{equation*}
\int_\R \rho(x_1)\left(\J_{\R}^{-1}\phi\right)(x_1)\overline{\left(\J_{\R}^{-1}\overline{\psi}\right)}(x_1)\d x_1,\quad x_1\in \R,
\end{equation*}
where $\phi\in H^r_0(\Wast;L^2(\W))$ and $\psi\in H^{-r}_0(\Wast;L^2(\W))$, $r\in\R$, and the operator $\J_\R$ is the Bloch transform defined on the real line. From Remark \ref{rem:Four}, $\phi$ could be defined by Fourier series, i.e.,
\begin{eqnarray*}
 \phi(\alpha,x_1)=C_\Lambda\sum_{\ell\in\Z}\widehat{\phi}_{\Lambda^*}(\ell,x_1)e^{-\i\alpha\cdot\Lambda\ell},\quad\widehat{\phi}_{\Lambda^*}(\ell,\cdot)=\left<\phi,\phi^{(\ell)}_{\Lambda^*}\right>_{L^2(\Wast)}\in L^2(\W).
\end{eqnarray*}

Define the finite dimensional space by
\begin{equation*}
X_N\left(\Wast;L^2(\W)\right):=\left\{\phi(\alpha,x_1)=\sum_{\ell\in\Z_N}c_\ell(x_1) e^{-\i\alpha\cdot\Lambda\ell}:\, c_\ell\in L^2(\W)\right\}
\end{equation*}
where $\Z_N=\left\{-N/2+1,\dots,N/2\right\}$ when $N$ is even. 
\begin{remark}
For simplicity, we assume that $N$ are even numbers.
\end{remark}
The approximation of $\phi$ in  the subspace $X_N\left(\Wast;L^2(\W)\right)$ is given by
\begin{equation*}
\phi_N(\alpha,x_1)=\sum_{\ell\in\Z_N}\widehat{\phi}_{\Lambda^*}(\ell,x_1)e^{-\i\alpha\cdot\Lambda\ell}.
\end{equation*}
From direct computation, the inverse Bloch transforms of $\phi(\alpha,x_1)$ and $\phi_N(\alpha,x_1)$
\begin{eqnarray*}
&&\left(\J_\R^{-1}\phi\right)(x_1+\Lambda j)=\widehat{\phi}_{\Lambda^*}(j,x_1);\\
&&\left(\J_\R^{-1}\phi_N\right)(x_1+\Lambda j)=\widehat{\phi}_{\Lambda^*}(j,x_1)\delta_{j,\ell}
\end{eqnarray*}
in the weighted Sobolev space $H_r^0(\R)$, where $\delta_{j,\ell}$ equals to 1 when $j=\ell$ and equals to 0 otherwise. Thus
$\J_\R^{-1}\phi_N$ is compactly supported in $\cup_{j\in\Z_N}\left[\W+\Lambda j\right]$. Define the indicator function $\mathcal{X}$ by
\begin{equation*}
\mathcal{X}_N(t)=\begin{cases}
1,\quad\text{ in }\cup_{j\in\Z_N}\left[\W+\Lambda j\right];\\
0,\quad\text{otherwise},
\end{cases}
\end{equation*}
then
\begin{equation*}
\left(\J_\R^{-1}\phi_N\right)(x_1)=\left(\J_\R^{-1}\phi\right)(x_1)\mathcal{X}_N(x_1).
\end{equation*}
Thus the integral satisfies
\begin{equation*}
\int_\R\rho(x_1)\left(\J_{\R}^{-1}\phi_N\right)(x_1)\overline{\left(\J_{\R}^{-1}\overline{\psi}\right)}(x_1)\d x_1=\int_{\R}\rho(x_1)\mathcal{X}_N(x_1)\left(\J_{\R}^{-1}\phi\right)(x_1)\overline{\left(\J_{\R}^{-1}\overline{\psi}\right)}(x_1)\d x_1
\end{equation*}
Define the truncated function
\begin{equation*}
\rho_N(x_1):=\rho(x_1)\mathcal{X}_N(x_1),
\end{equation*}
then
\begin{equation*}
\int_\R\rho(x_1)\left(\J_{\R}^{-1}\phi_N\right)(x_1)\overline{\left(\J_{\R}^{-1}\overline{\psi}\right)}(x_1)\d x_1=\int_{\R}\rho_N(x_1)\left(\J_{\R}^{-1}\phi\right)(x_1)\overline{\left(\J_{\R}^{-1}\overline{\psi}\right)}(x_1)\d x_1
\end{equation*}
We can also define the finite Fourier series of $\psi_N$ from $\psi$ similarly, then the following relationship could be obtained
\begin{equation}\label{eq:approx_finite}
\begin{aligned}
\int_\R\rho(x_1)\left(\J_{\R}^{-1}\phi_N\right)(x_1)\overline{\left(\J_{\R}^{-1}\overline{\psi}\right)}(x_1)\d x_1&=\int_\R\rho(x_1)\left(\J_{\R}^{-1}\phi\right)(x_1)\overline{\left(\J_{\R}^{-1}\overline{\psi_N}\right)}(x_1)\d x_1\\&=\int_{\R}\rho_N(x_1)\left(\J_{\R}^{-1}\phi\right)(x_1)\overline{\left(\J_{\R}^{-1}\overline{\psi}\right)}(x_1)\d x_1.
\end{aligned}
\end{equation}

\subsection{Extension to the sesquilinear form $b(w,v)$}

Similar to the one-dimensional case, we can also approximate $w_B$ in a finite dimensional subspace with respect to $\alpha$. Let the subspace of $H_0^r(\Wast;\widetilde{H}^1_\alpha(D^\Lambda_H))$ by
\begin{equation*}
X_N\left(\Wast;\widetilde{H}^1_\alpha(D^\Lambda_H)\right):=\left\{\phi(\alpha,x)=\sum_{\ell\in\Z_N}c_\ell(x) e^{-\i\alpha\cdot\Lambda\ell}:\, c_\ell\in \widetilde{H}^1_\alpha(D^\Lambda_H)\right\},
\end{equation*}

From the definition of $w_B=\J_\Omega u_T$, it has the representation
\begin{equation*}
w_B(\alpha,x)=C_\Lambda\sum_{j\in\Z}u_T\left(x+\left(\begin{matrix}
\Lambda j\\0
\end{matrix}
\right)\right)e^{-\i\alpha\Lambda j}.
\end{equation*}
Let $N>0$ be any even positive integer, then the approximation of $w_B$ in the subspace $X_N\left(\Wast;\widetilde{H}^1_\alpha(D^\Lambda_H)\right)$ has the representation
\begin{equation}\label{eq:approx_finit}
w_B^N(\alpha,x)=C_\Lambda\sum_{j\in\Z_N} u_T\left(x+\left(\begin{matrix}
\Lambda j\\0
\end{matrix}
\right)\right)e^{-\i\alpha\Lambda j}.
\end{equation}
The error of the approximation is estimated in the following theorem. 

\begin{theorem}\label{th:approx}
If $w_B\in H_0^r(\Wast;H_\alpha^s(D^\Lambda_H))$, then for any $r'<r$,
\begin{equation}
\|w_B-w_B^N\|_{H_0^{r'}(\Wast;H_\alpha^s(D^\Lambda_H))}\leq N^{r'-r}\|w_B\|_{H_0^r(\Wast;H_\alpha^s(D^\Lambda_H))}.
\end{equation}
\end{theorem}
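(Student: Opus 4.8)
The plan is to reduce the estimate to the Fourier-coefficient characterization of the norm on $H_0^r(\Wast;H_\alpha^s(D^\Lambda_H))$ and then to bound the tail of the series. First I would recall, using the mapping properties of the Bloch transform collected in the Appendix, that every element has the expansion in the quasi-periodicity variable
\[
w_B(\alpha,x)=C_\Lambda\sum_{\ell\in\Z}\widehat{w_B}_{\Lambda^*}(\ell,x)\,e^{-\i\alpha\cdot\Lambda\ell},
\]
and that its norm is equivalent to the weighted $\ell^2$-norm of its coefficients, i.e.\ there is $C\geq 1$ with
\[
C^{-1}\|w_B\|_{H_0^r(\Wast;H_\alpha^s(D^\Lambda_H))}^2\leq\sum_{\ell\in\Z}(1+|\ell|^2)^r\,\|\widehat{w_B}_{\Lambda^*}(\ell,\cdot)\|_{H_\alpha^s(D^\Lambda_H)}^2\leq C\|w_B\|_{H_0^r(\Wast;H_\alpha^s(D^\Lambda_H))}^2 .
\]
Comparing with \eqref{eq:approx_finit}, $w_B^N$ is exactly the partial sum of this series over $\ell\in\Z_N$, so the error $w_B-w_B^N$ is the tail obtained by keeping only the frequencies $\ell\notin\Z_N$.

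Second, I would write the error norm in the weaker index $r'$ as a sum over the discarded frequencies and split the weight, using $(1+|\ell|^2)^{r'}=(1+|\ell|^2)^{r'-r}(1+|\ell|^2)^r$:
\[
\|w_B-w_B^N\|_{H_0^{r'}(\Wast;H_\alpha^s(D^\Lambda_H))}^2\;\leq\;C\sum_{\ell\notin\Z_N}(1+|\ell|^2)^{r'-r}(1+|\ell|^2)^r\,\|\widehat{w_B}_{\Lambda^*}(\ell,\cdot)\|_{H_\alpha^s(D^\Lambda_H)}^2 .
\]
Since $\Z_N=\{-N/2+1,\dots,N/2\}$, every discarded index satisfies $|\ell|\geq N/2$; and because $r'<r$ the factor $(1+|\ell|^2)^{r'-r}$ is decreasing in $|\ell|$, so it is bounded above, uniformly in $\ell\notin\Z_N$, by $(1+(N/2)^2)^{r'-r}\leq (N/2)^{2(r'-r)}=2^{2(r-r')}N^{2(r'-r)}$. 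Pulling this uniform factor out of the sum and bounding the remaining series by the full sum of coefficients gives, after another use of the norm equivalence,
\[
\|w_B-w_B^N\|_{H_0^{r'}(\Wast;H_\alpha^s(D^\Lambda_H))}^2\leq C\,N^{2(r'-r)}\,\|w_B\|_{H_0^r(\Wast;H_\alpha^s(D^\Lambda_H))}^2 ,
\]
and taking square roots yields the claim, with the constant $C$ absorbed into, or tracked alongside, the norm equivalence.

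The factoring and the monotonicity of the weight are routine; the step that needs genuine care is the very first one. Because the fibre space $H_\alpha^s(D^\Lambda_H)$ really depends on the parameter $\alpha$, the Fourier-series characterization of the $H_0^r(\Wast;\cdot)$-norm is not automatic: one must invoke the identification of $H_0^r(\Wast;H_\alpha^s(D^\Lambda_H))$ with a parameter-independent weighted sequence space of $H_\alpha^s$-valued coefficients, which is precisely what the mapping properties of the Bloch transform (Appendix) provide. Once that identification is fixed, the decay estimate is the standard spectral tail bound and the argument is the analogue of the one-dimensional model computation leading to \eqref{eq:approx_finite}. I would also note that the same computation exhibits $w_B^N$ as the truncation of $w_B$ to the frequencies in $\Z_N$, so the exponent $r'-r$ is the natural (and, on the scale of these spaces, sharp) rate.
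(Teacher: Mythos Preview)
Your argument is correct and follows essentially the same route as the paper: expand $w_B$ in the Fourier basis $\{e^{-\i\alpha\Lambda\ell}\}_{\ell\in\Z}$, identify $w_B-w_B^N$ as the tail over $\ell\notin\Z_N$, split the weight $(1+|\ell|^2)^{r'}=(1+|\ell|^2)^{r'-r}(1+|\ell|^2)^r$, and use monotonicity to pull out $(N/2)^{2(r'-r)}$. The only minor difference is that you carry a norm-equivalence constant $C$, whereas the paper's Remark~\ref{rem:Four} in the Appendix takes the weighted $\ell^2$-sum as the \emph{definition} of the squared norm, so no constant appears and one obtains the clean bound $(N/2)^{r'-r}$ directly.
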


\begin{proof}
From the definition of $w_B$,
\begin{equation*}
w_B-w_B^N=C_\Lambda\sum_{|j|>N}u_T\left(x+\left(\begin{matrix}
\Lambda j\\0
\end{matrix}
\right)\right)e^{-\i\alpha\Lambda j}.
\end{equation*}
As $w_B\in H_0^r(\Wast;H_\alpha^s(D^\Lambda_H))$, from Remark \ref{rem:Four}, 
\begin{equation*}
\|w_B\|_{H_0^r(\Wast;H_\alpha^s(D^\Lambda_H))}^2=\sum_{\ell\in\Z}(1+|\ell|^2)^r\left\|u_T(\cdot+(\Lambda j,0)^\top)\right\|^2_{H^s_\alpha(D^\Lambda_H)}<\infty.
\end{equation*}
We can obtain the norm of $w_B-w_B^N$ in the same way, i.e.,
\begin{equation*}
\begin{aligned}
\|w_B-w_B^N\|_{H_0^{r'}(\Wast;H_\alpha^s(D^\Lambda_H))}^2&=\sum_{\ell\in\Z\setminus\Z_N}(1+|\ell|^2)^{r'}\left\|u_T(\cdot+(\Lambda j,0)^\top)\right\|^2_{H^s_\alpha(D^\Lambda_H)}\\
&\leq (1+(N/2)^2)^{r'-r}\sum_{|\ell|>N}(1+|\ell|^2)^r\left\|u_T(\cdot+(\Lambda j,0)^\top)\right\|^2_{H^s_\alpha(D^\Lambda_H)}\\
&\leq (N/2)^{2(r'-r)}\|w_B\|^2_{H_0^r(\Wast;H_\alpha^s(D^\Lambda_H))}.
\end{aligned}
\end{equation*}
So $\|w_B-w_B^N\|_{H_0^{r'}(\Wast;H_\alpha^s(D^\Lambda_H))}\leq (N/2)^{r'-r}\|w_B\|_{H_0^r(\Wast;H_\alpha^s(D^\Lambda_H))}$. The proof is finished.

\end{proof}

Following the procedure in the first section, we can redefine the indicator function $\mathcal{X}_N$ in the two dimensional space by
\begin{equation*}
\mathcal{X}_N=\begin{cases}
1,\quad\text{ in }\cup_{j\in\Z_N}\left[\W+\Lambda j\right]\times\R;\\
0,\quad\text{ otherwise},
\end{cases}
\end{equation*}
and define the truncated functions of $c$ and $A$ by
\begin{equation*}
c_N(x)=c(x)\mathcal{X}_N(x);\quad A_N(x)=A(x)\mathcal{X}_N(x).
\end{equation*}
Then the sesquilinear form $b(w_B,v_B)$ has the representation
\begin{equation*}
\begin{aligned}
b(w_B^N,v_B)&=\int_{D_H}\Big[ A\grad( \J_{D_H}^{-1} w_B^N)\cdot\grad\overline{\left(\J_{D_H}^{-1} \overline{v_B}\right)}-k^2 c(\J_{D_H}^{-1}w_B^N)\cdot\overline{\left(\J_{D_H}^{-1} \overline{v_B}\right)}\Big]\d x\\
&=\int_{D_H}\Big[ A\grad( \J_{D_H}^{-1} w_B)\cdot\grad\overline{\left(\J_{D_H}^{-1} \overline{v_B}\right)}-k^2 c(\J_{D_H}^{-1}w_B)\cdot\overline{\left(\J_{D_H}^{-1} \overline{v_B}\right)}\Big]\mathcal{X}_N(x)\d x\\
&=\int_{D_H}\Big[ A_N\grad( \J_{D_H}^{-1} w_B)\cdot\grad\overline{\left(\J_{D_H}^{-1} \overline{v_B}\right)}-k^2 c_N(\J_{D_H}^{-1}w_B)\cdot\overline{\left(\J_{D_H}^{-1} \overline{v_B}\right)}\Big]\d x.
\end{aligned}
\end{equation*}
Define 
\begin{equation*}
b_N(w,v)=\int_{D_H}\Big[ A_N\grad( \J_{D_H}^{-1} w)\cdot\grad\overline{\left(\J_{D_H}^{-1} \overline{v}\right)}-k^2 c_N(\J_{D_H}^{-1}w)\cdot\overline{\left(\J_{D_H}^{-1} \overline{v}\right)}\Big]\d x,
\end{equation*}
then
\begin{equation*}
b(w_B^N,v_B)=b_N(w_B,v_B).
\end{equation*}

We can similarly approximate $v_B$ by $v_B^N\in X_N\left(\Wast;\widetilde{H}^1_\alpha(D^\Lambda_H)\right)$, then 
\begin{equation}\label{eq:ort}
b(w_B^N,v_B^N)=b(w_B^N,v_B)=b(w_B,v_B^N)=b_N(w_B,v_B)=b_N(w_B^N,v_B^N).
\end{equation}

\subsection{Truncated Bloch transformed fields}

Recall the variational form \eqref{eq:var_Bloch}:
\begin{equation*}
\int_\Wast a_\alpha(w_B(\alpha,\cdot),v_B(\alpha,\cdot))\d\alpha+b(w_B,v_B)=\int_\Wast\int_{\Gamma^\Lambda_H}F(\alpha,x)\overline{v_B(\alpha,x)}\d s(x)\d\alpha
\end{equation*}
for any $v_B\in H_0^{-r}(\Wast;\widetilde{H}^1_\alpha(D^\Lambda_H))$. Replace $v_B$ by $v_B^N$, from the orthogonality and \ref{eq:ort},
\begin{equation*}
\int_\Wast a_\alpha(w_B^N(\alpha,\cdot),v_B^N(\alpha,\cdot))\d\alpha+b_N(w_B^N,v_B^N)=\int_\Wast\int_{\Gamma^\Lambda_H}F(\alpha,x)\overline{v_B^N(\alpha,x)}\d s(x)\d\alpha.
\end{equation*}
Replace $v_B$ by $v_B-v_B^N$ in the variational form, use the orthogonality again,
 \begin{equation*}
\int_\Wast a_\alpha(w_B^N(\alpha,\cdot),\left(v_B-v_B^N\right)(\alpha,\cdot))\d\alpha=0;\quad b_N(w_B^N,v_B-v_B^N)=0.
\end{equation*}
Thus $w_B^N$ satisfies the variational problem
\begin{equation}\label{eq:var_trunc}
\int_\Wast a_\alpha(w_B^N(\alpha,\cdot),v_B(\alpha,\cdot))\d\alpha+b_N(w_B^N,v_B)=\int_\Wast\int_{\Gamma^\Lambda_H}F(\alpha,x)\overline{v_B(\alpha,x)}\d s(x)\d\alpha
\end{equation}
for any $v_B\in H_0^{-r}(\Wast;\widetilde{H}_\alpha^1(D^\Lambda_H))$.

Although the cut-offed functions $A_N$ and $c_N$ maybe no longer continuous, the well-posedness of the variational form \eqref{eq:var_trunc} still holds in weighted Sobolev spaces, as is shown in the next theorem.

\begin{theorem}\label{th:solv_trunc}
When $N$ is large enough and $|r|<1$, the variational problem \eqref{eq:var_trunc} is uniquely solvable in $H_0^{r'}(\Wast;\widetilde{H}_\alpha^1(D^\Lambda_H))$ for any $-1<r'<r$.
\end{theorem}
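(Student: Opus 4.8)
The plan is to read \eqref{eq:var_trunc} as an operator equation $\mathcal{A}_N w=\ell$ on $H_0^{r'}(\Wast;\widetilde{H}^1_\alpha(D^\Lambda_H))$, where $\mathcal{A}_N$ is generated by $\int_\Wast a_\alpha(\cdot,\cdot)\d\alpha+b_N(\cdot,\cdot)$ and $\ell$ by the right-hand side, and to treat it as a perturbation of the operator $\mathcal{A}$ generated by $\int_\Wast a_\alpha+b$, which corresponds to the full problem \eqref{eq:var_Bloch}. By Theorem \ref{th:solv}, Lemma \ref{th:equivalent}, and a duality argument for negative exponents, $\mathcal{A}$ is an isomorphism from $H_0^s(\Wast;\widetilde{H}^1_\alpha(D^\Lambda_H))$ onto the dual of $H_0^{-s}(\Wast;\widetilde{H}^1_\alpha(D^\Lambda_H))$ for every $|s|<1$; I would record this isomorphism property on the whole scale $-1<s<1$ at the outset, since both $r$ and $r'$ are needed. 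Existence of a solution for the data arising from \eqref{eq:var_Bloch} is already in hand, namely the truncation $w_B^N$ of \eqref{eq:approx_finit}, so the genuine content is well-posedness (uniqueness and the a priori bound) in the lower space $H_0^{r'}$.

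The second step is the key quantitative estimate on the defect $\mathcal{A}-\mathcal{A}_N$, i.e.\ on $b-b_N$. Its coefficients $A-A_N=A(1-\mathcal{X}_N)$ and $c-c_N=c(1-\mathcal{X}_N)$ are bounded but \emph{do not decay}; they are merely supported in the far region $\{x:\,x_1\notin\cup_{j\in\Z_N}[\W+\Lambda j]\}$, where $|x_1|\gtrsim N$. Writing $u=\J_{D_H}^{-1}w$, $\widetilde v=\J_{D_H}^{-1}\overline v$ and using that $\J_{D_H}^{-1}$ is an isometry onto the weighted space $\widetilde{H}^1_s(D_H)$, I would distribute the weight as
\begin{equation*}
|\grad u|\,|\grad\widetilde v|=\big[(1+x_1^2)^{r/2}|\grad u|\big]\big[(1+x_1^2)^{-r'/2}|\grad\widetilde v|\big]\,(1+x_1^2)^{(r'-r)/2},
\end{equation*}
and bound the surplus weight on the support of $1-\mathcal{X}_N$ by $(1+x_1^2)^{(r'-r)/2}\lesssim N^{-(r-r')}$. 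A Cauchy--Schwarz argument then yields
\begin{equation*}
|(b-b_N)(w,v)|\le C\,N^{-(r-r')}\,\|w\|_{H_0^r(\Wast;\widetilde{H}^1_\alpha(D^\Lambda_H))}\,\|v\|_{H_0^{-r'}(\Wast;\widetilde{H}^1_\alpha(D^\Lambda_H))},
\end{equation*}
so that $\mathcal{A}-\mathcal{A}_N$ is small, of size $N^{-(r-r')}$, \emph{as an operator from the higher space $H_0^r$ into the dual of $H_0^{-r'}$}.

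The third step handles uniqueness. Because $A_N$ and $c_N$ are compactly supported in the $x_1$–direction, \eqref{eq:var_trunc} is exactly of the locally-perturbed-periodic type analysed in \cite{Lechl2017}; I would invoke that theory to obtain that $\mathcal{A}_N$ is Fredholm of index zero on $H_0^{r'}(\Wast;\widetilde{H}^1_\alpha(D^\Lambda_H))$, the point being that the bounded-$x_1$ support makes $b_N$ induce a compact operator on the Bloch side, unlike the global $b$. It then suffices to rule out a kernel. Given $w\in H_0^{r'}$ with $\mathcal{A}_N w=0$, one has $\mathcal{A}w=(\mathcal{A}-\mathcal{A}_N)w$; exploiting that kernel elements of the locally-perturbed operator inherit the extra decay of the problem, i.e.\ lie in $H_0^{r}$, the estimate of the second step makes the right-hand side small relative to $w$, which is incompatible with $\mathcal{A}$ being an isomorphism once $N$ is large. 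Fredholmness of index zero together with injectivity then shows that $\mathcal{A}_N$ is an isomorphism on $H_0^{r'}$, which is the assertion.

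The main obstacle is precisely the coupling term $b$: since its coefficients do not decay, $b-b_N$ is \emph{not} small on the single space $H_0^{r'}$ in which the solution is sought, and smallness is recovered only by spending the weight gap $r-r'$, i.e.\ by measuring the defect between the two different spaces $H_0^{r}$ and $H_0^{r'}$. Reconciling this two-space smallness with a genuine well-posedness statement in $H_0^{r'}$ is the delicate part, and it is what forces both the hypothesis $r'<r$ and the requirement that $N$ be large. The accompanying technical heart is the claim that homogeneous solutions actually possess the stronger decay (membership in $H_0^{r}$), for which the compact $x_1$–support of $A_N$, $c_N$ and the regularity theory of \cite{Lechl2017} are essential; absent that gain of decay, the naive perturbation estimate closes only up to a mismatch between the $H_0^{r}$ and $H_0^{r'}$ norms, and removing this mismatch is the step I expect to require the most care.
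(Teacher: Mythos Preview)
Your quantitative estimate on $b-b_N$ matches the paper's, but the derivation differs: you split weights on the physical strip and use that $1-\mathcal{X}_N$ is supported where $|x_1|\gtrsim N$, whereas the paper uses the algebraic identity \eqref{eq:ort}, namely $b_N(w,v)=b(w_N,v)$, so that $(b-b_N)(w,v)=b(w-w_N,v)$, and then bounds $\|w-w_N\|_{H_0^{r'}}$ directly by Theorem~\ref{th:approx}. The paper's route is shorter because it recycles the Fourier-truncation machinery already built in Section~4.2; your weight argument is an independent proof of the same inequality.

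The more substantial divergence is your third step. The paper simply declares that, by the estimate above, \eqref{eq:var_trunc} is a small perturbation of \eqref{eq:var_Bloch} on $H_0^{r'}\times H_0^{-r'}$ and therefore inherits unique solvability; it introduces no Fredholm argument and does not invoke \cite{Lechl2017}. You correctly observe that the smallness is really \emph{between} the two scales $H_0^{r}$ and $H_0^{r'}$, so a plain Neumann series on $H_0^{r'}$ does not close, and you propose to repair this via Fredholm index zero for the compactly perturbed problem together with a gain-of-decay claim for kernel elements. This is a reasonable and in fact more careful programme than the paper's one-line conclusion, and the obstacle you single out (showing homogeneous solutions actually lie in $H_0^{r}$) is the genuine crux. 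One caution: the truncated coefficients $A_N,c_N$ are generally discontinuous in $x_1$ (the corresponding surface $\zeta\mathcal{X}_N+h_0(1-\mathcal{X}_N)$ need not be Lipschitz), so if you appeal to \cite{Lechl2017} you should verify that its Fredholm and regularity statements do not rely on surface regularity beyond what survives truncation.
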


\begin{proof}As the variational problem \eqref{eq:var_trunc} is a perturbation of \eqref{eq:var_Bloch}, we only need to consider difference between the sesquilinear form $b(\cdot,\cdot)$ and $b_N(\cdot,\cdot)$. For any $w\in H_0^r(\Wast;\widetilde{H}^1_\alpha(D^\Lambda_H))$, the approximation in the finite dimensional subspace $X_N(\Wast;\widetilde{H}^1_\alpha(D^\Lambda_H))$ is defined by \eqref{eq:approx_finite}. From Theorem \ref{th:approx}, for any $r'<r$, the error between $w$ and its approximation $w_N$ in $H_0^{r'}(\Wast;\widetilde{H}_\alpha^1(D^\Lambda_H))$ is  bounded by
\begin{equation*}
\|w-w_N\|_{H_0^{r'}(\Wast;\widetilde{H}_\alpha^1(D^\Lambda_H))}\leq (N/2)^{r'-r}\|w\|_{H_0^r(\Wast;\widetilde{H}_\alpha^1(D^\Lambda_H))}.
\end{equation*}
Then from \eqref{eq:ort} and the boundedness of $b(\cdot,\cdot)$,
\begin{equation*}
\begin{aligned}
\left|b(w,v)-b_N(w,v)\right|&=\left|b(w-w_N,v)\right|\\
&\leq C \|w-w_N\|_{H_0^{r'}(\Wast;\widetilde{H}_\alpha^1(D^\Lambda_H))}\|v\|_{H_0^{-r'}(\Wast;\widetilde{H}_\alpha^1(D^\Lambda_H))}\\
&\leq C(N/2)^{r'-r}\|w\|_{H_0^r(\Wast;\widetilde{H}_\alpha^1(D^\Lambda_H))} \|v\|_{H_0^{-r'}(\Wast;\widetilde{H}_\alpha^1(D^\Lambda_H))}.
\end{aligned}
\end{equation*}
Thus when $N\rightarrow+\infty$, 
\begin{equation*}
b_N(w,v)\rightarrow b(w,v).
\end{equation*}
Thus when $N$ is large enough, \eqref{eq:var_trunc} is a small perturbation of \eqref{eq:var_Bloch} in $H_0^{r'}(\Wast;\widetilde{H}_\alpha^1(D^\Lambda_H))\times H_0^{-r'}(\Wast;\widetilde{H}_\alpha^1(D^\Lambda_H))$. From the well-posedness of \eqref{eq:var_Bloch}, the variational form \eqref{eq:var_trunc} is uniquely solvable in $H_0^{r'}(\Wast;\widetilde{H}_\alpha^1(D^\Lambda_H))$. The proof is finished.

\end{proof}

\section{The finite element method}

In this section, we discuss a Garlekin discretization for the scattering problems from rough surfaces. As was shown in the last section, the field $w_B(\alpha,\cdot)$ could be approximated by finite Fourier series $w_B^N(\alpha,x)$ defined by \eqref{eq:approx_finit}, which is exactly the solution of the truncated problem \eqref{eq:var_trunc}. Let the uniformly distributed grid points in $\Wast$ defined by
\begin{equation*}
\alpha_N^{(1)}=-\frac{\pi}{\Lambda}+\frac{\pi}{N\Lambda},\quad
\alpha_N^{(j)}=\alpha_N^{(j-1)}+\frac{2\pi}{N\Lambda}\in\Wast,\,j=2,\dots,N.
\end{equation*}
Then define the piecewise basic functions  $\left\{\psi_N^{(j)}\right\}_{j=1}^N$ such that for any $j=1,\dots,N$, $\psi_N^{(j)}$ equals to $1$ in the $j$-th interval $\left(\alpha_N^{j}-\pi/(N\Lambda),\alpha_N^{(j)}+\pi/(N\Lambda)\right]$ and equals to $0$ otherwise. Assume that $\mathcal{M}_h$ is a family of regular and quasi-uniform meshes (see \cite{Brenn1994}) for the periodic cell $D^\Lambda_H$,  where $0<h<h_0$ and $h_0$ is a small enough positive number. To obtain the periodic basic functions, it is required that the nodal points on the left and right boundaries have the same heights. By omitting the nodal points on the left boundary, let $\left\{\phi_M^{(\ell)}\right\}_{\ell=1}^M$ be the piecewise linear and globally continuous nodal functions equal to one at one point except for the lower boundary, and equal zero at other nodal points, then $\widetilde{V}_h:={\rm span}\left\{\phi_M^{(\ell)}\right\}_{\ell=1}^M$ is a subspace $\widetilde{H}_0^1(D^\Lambda_H)$. Then we can define the finite element space $\widetilde{X}_{N,h}$ by
\begin{equation*}
\widetilde{X}_{N,h}:=\left\{v_{N,h}(\alpha,x)=e^{-\i\alpha x_1}\sum_{j=1}^N\sum_{\ell=1}^M v_{N,h}^{(j,\ell)}\psi_N^{(j)}(\alpha)\phi_M^{(\ell)}(x):\, v_{N,h}^{(j,\ell)}\in\C\right\}.
\end{equation*}
It is easy to check that $\widetilde{X}_{N,h}\subset L^2(\Wast;\widetilde{H}^1_\alpha(D^\Lambda_H))$ following \cite{Lechl2017}. Moreover, from the definition of the basic functions, $\widetilde{X}_{N,h}\subset X_N\left(\Wast;\widetilde{H}_\alpha^1(D^\Lambda_H)\right)$. We will seek for a finite element solution $w_{N,h}\in \widetilde{X}_{N,h}$ to the truncated problem
\begin{equation}\label{eq:var_discrete}
\int_\Wast a_\alpha(w_{N,h},v_{N,h})\d\alpha+b_N(w_{N,h},v_{N,h})=\int_\Wast\int_{\Gamma^\Lambda_H}F(\alpha,x)\overline{v}_{N,h}\d s\d\alpha
\end{equation}
for any $v_{N,h}\in \widetilde{X}_{N,h}$. 

From the definition of $b_N(\cdot,\cdot)$, it could be written into the finite sum
\begin{equation*}
b_N(w,v)=\sum_{m\in\Z_N}b_N^{(m)}\left(( \J_{D_H}^{-1} w)\left(\begin{smallmatrix}
x_1+\Lambda m\\x_2
\end{smallmatrix}
\right),{\left(\J_{D_H}^{-1} \overline{v}\right)\left(\begin{smallmatrix}
x_1+\Lambda m\\x_2
\end{smallmatrix}
\right)}\right),
\end{equation*}
where
\begin{equation*}
b_N^{(m)}(w,v)=\int_{D_H^\Lambda}\left[ A_N^{(m)}(x)\grad w\cdot\grad\overline{v}-k^2 c_N^{(m)}(x)w\overline{v}\right]\d x.
\end{equation*}
In the definition, $A_N^{(m)}(x)=A\left(\begin{smallmatrix}x_1+\Lambda m\\x_2\end{smallmatrix}\right)$, $c_N^{(m)}(x)=c\left(\begin{smallmatrix}x_1+\Lambda m\\x_2\end{smallmatrix}\right)$. The inverse Bloch transform can be explicitly computed
\begin{equation}\label{eq:inv_Bloch}
\begin{aligned}
&\left(\J_{D_H}^{-1} w_{N,h}\right)\left(x+\left(\begin{matrix}
\Lambda m\\0
\end{matrix}\right)\right)=C_\Lambda\int_\Wast w_{N,h}(\alpha,x)e^{\i\alpha\cdot\Lambda m}\d\alpha\\
=&C_\Lambda\int_\Wast \left[e^{-\i\alpha x_1}\sum_{j=1}^N\sum_{\ell=1}^M w_{N,h}^{(j,\ell)}\psi_N^{(j)}(\alpha)\phi_M^{(\ell)}(x)\right]e^{\i\alpha\cdot\Lambda m}\d\alpha\\
=&C_\Lambda\sum_{j=1}^N g_N^{(j,m)}(x_1)\sum_{\ell=1}^M w_{N,h}^{(j,\ell)}\phi_M^{(\ell)}(x):=\J_{D_H,N,m}^{-1}\left(\left\{w_{N,h}^{(j,\ell)}\right\}_{j,\ell=1}^{N,M}\right),
\end{aligned}
\end{equation}
where 
\begin{equation*}
g_N^{(j,m)}(x_1)=\i e^{-\i\alpha_N^{(j)}(x_1-\Lambda m)}\frac{e^{-\i\pi(x_1-\Lambda m)/(N\Lambda)}-e^{\i\pi(x_1-\Lambda m)/(N\Lambda)}}{x_1-\Lambda m}\quad\text{ if }x_1\neq \Lambda m, 
\end{equation*}
and
\begin{equation*}
g_N^{(j,m)}(x_1)=\frac{2\pi}{N\Lambda}\quad\text{ if }x_1= \Lambda m. 
\end{equation*}
 For details see the next section for the numerical implementation.

 The well-posedness and convergence of the finite dimensional problem \eqref{eq:var_discrete}  could be obtained.
 
\begin{theorem}\label{th:err_approx}
Assume that $f\in H_r^{1/2}(\Gamma_H)$ for $r> 1/2$ and $\zeta\in C^{2,1}(\R)$. Then the linear system \eqref{eq:var_discrete} is uniquely solvable in $\widetilde{X}_{N,h}$ for any $F(\alpha,\cdot)=\left(\J_{\Gamma_H} f\right)(\alpha,\cdot)$ in $H_0^r(\Wast;H^{1/2}_\alpha(\Gamma^\Lambda_H))$, when $N\geq N_0$  and $0<h<h_0$, where $N_0$ is sufficiently large and $h_0>0$ is small enough. The solution $w_{N,h}\in \widetilde{X}_{N,h}$ satisfies the error estimate for any $1/2<r'<r$:
\begin{equation}
\|w_{N,h}-w_B^N\|_{L^2(\Wast;H^\ell(D^\Lambda_H))}\leq Ch^{1-\ell}\left(N^{-r'}+h\right)\|f\|_{H_{r'}^{1/2}(\Gamma_H)},\quad\ell=0,1.
\end{equation}
\end{theorem}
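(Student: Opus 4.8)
The plan is to establish the result as a standard Galerkin/finite-element convergence argument, combining the abstract well-posedness of the truncated problem from Theorem~\ref{th:solv_trunc} with a C\'ea-type quasi-optimality estimate and the approximation properties of the product finite element space $\widetilde{X}_{N,h}$. First I would observe that, by Theorem~\ref{th:higher_reg}, the regularity assumptions $f\in H_r^{1/2}(\Gamma_H)$ and $\zeta\in C^{2,1}(\R)$ guarantee that the exact truncated field $w_B^N$ lies in $H_0^{r}(\Wast;\widetilde{H}_\alpha^2(D^\Lambda_H))$ with a norm controlled by $\|f\|_{H_{r'}^{1/2}(\Gamma_H)}$; this elevated regularity in the spatial variable is what will produce the factor $h^{1-\ell}$ in the final estimate and is exactly why the hypothesis $\zeta\in C^{2,1}$ is imposed.

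Next I would verify that the discrete problem \eqref{eq:var_discrete} is well posed. Since $\widetilde{X}_{N,h}\subset X_N(\Wast;\widetilde{H}_\alpha^1(D^\Lambda_H))$, the orthogonality relations \eqref{eq:ort} show that $b_N$ and $b$ coincide on this subspace, so \eqref{eq:var_discrete} is a conforming Galerkin discretization of the truncated variational problem \eqref{eq:var_trunc}. The sesquilinear form $\int_\Wast a_\alpha(\cdot,\cdot)\d\alpha + b_N(\cdot,\cdot)$ satisfies a G\r{a}rding-type inequality: the principal part is coercive up to a compact (lower-order $L^2$) perturbation, and the Dirichlet-to-Neumann term $T_\alpha^+$ contributes a bounded, sign-definite imaginary part. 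I would invoke the unique solvability from Theorem~\ref{th:solv_trunc} for the continuous truncated problem together with a discrete inf-sup (equivalently, an Aubin--Nitsche-type argument for the adjoint) to conclude that, for $N\geq N_0$ and $0<h<h_0$, the discrete inf-sup constant is bounded below uniformly, giving unique solvability of \eqref{eq:var_discrete}.

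With well-posedness in hand, the quasi-optimality estimate
\begin{equation*}
\|w_{N,h}-w_B^N\|_{L^2(\Wast;H^1(D^\Lambda_H))}\leq C\inf_{v_{N,h}\in\widetilde{X}_{N,h}}\|w_B^N-v_{N,h}\|_{L^2(\Wast;H^1(D^\Lambda_H))}
\end{equation*}
follows. I would then bound the best-approximation error by splitting it into the $\alpha$-direction and the spatial direction: the piecewise-constant interpolation by the functions $\{\psi_N^{(j)}\}$ against a field with $H^r$ regularity in $\alpha$ yields the factor $N^{-r'}$ (this is the quadrature/projection error in the quasi-periodicity parameter, analogous to Theorem~\ref{th:approx}), while the continuous piecewise-linear interpolation $\{\phi_M^{(\ell)}\}$ of an $\widetilde{H}_\alpha^2(D^\Lambda_H)$ field gives the factor $h$ from standard finite element theory. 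Combining these produces the bound $C(N^{-r'}+h)$ for $\ell=1$. For $\ell=0$ I would run the Aubin--Nitsche duality argument, gaining one extra power of $h$, which accounts for the $h^{1-\ell}$ prefactor in the statement.

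The main obstacle I anticipate is the uniform (in $N$ and $h$) discrete inf-sup stability. Because the cut-off coefficients $A_N,c_N$ are merely $L^\infty$ (not continuous), and because the form $b_N$ depends on $N$ through the nonlocal inverse Bloch transform coupling different Fourier modes, one cannot appeal directly to a fixed continuous problem; instead the stability must be shown to hold uniformly as $N\to\infty$. The key is that $b_N\to b$ in operator norm on the relevant spaces (as established in the proof of Theorem~\ref{th:solv_trunc}), so that the discrete problem is a genuinely small perturbation of a stable limit, and the Galerkin perturbation lemma then transfers inf-sup stability to the discrete level once $h$ is small enough and $N$ large enough. Making the two thresholds $N_0$ and $h_0$ independent of one another—rather than coupled—is the delicate point, and I would handle it by first fixing the spatial discretization to obtain stability for the $\alpha$-continuous truncated problem and only afterwards introducing the Fourier truncation as an outer perturbation.
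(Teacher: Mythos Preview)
Your proposal is correct and follows essentially the same route as the paper. The paper's own proof is extremely terse: it establishes the key regularity $w_B^N\in H_0^{r'}(\Wast;\widetilde{H}_\alpha^2(D^\Lambda_H))$ for $1/2<r'<r$ (via Theorem~\ref{th:higher_reg} and the perturbation argument of Theorem~\ref{th:solv_trunc} lifted to the $H^2$ setting) and then defers the entire Galerkin convergence analysis---discrete inf-sup stability, C\'ea quasi-optimality, tensor-product approximation yielding $N^{-r'}+h$, and the Aubin--Nitsche duality step for $\ell=0$---to Theorem~9 of \cite{Lechl2016a}. What you have written is precisely an expansion of that deferred argument, so the two agree in substance.
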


\begin{proof}
From Theorem 9 in \cite{Lechl2016a}, we only need to prove the solvability of the truncated problem \eqref{eq:var_trunc} when $r>1/2$ and $\zeta\in C^{2,1}(\R)$.  When $\zeta\in C^{2,1}(\R)$, from Theorem \ref{th:higher_reg}, $w_B\in H_0^r(\Wast;\widetilde{H}_\alpha^2(D^\Lambda_H))$. Following the proof in Theorem  Theorem \ref{th:solv_trunc}, as $b_N(\cdot,\cdot)$ is also a small perturbation of  $b(\cdot,\cdot)$ defined in $H_0^r(\Wast;\widetilde{H}_\alpha^2(D^\Lambda_H))\times H_0^{-r}(\Wast;\widetilde{H}_\alpha^2(D^\Lambda_H))$, we can also prove that $w_B^N\in H_0^{r'}(\Wast;\widetilde{H}_\alpha^2(D^\Lambda_H))$ for any $-1<r'<r$. Thus when $r>1/2$, we can find a $1/2<r'<r$, $w_B^N\in H_0^{r'}(\Wast;\widetilde{H}_\alpha^2(D^\Lambda_H))$. The rest of the proof is omitted for it is the same as the proof in \cite{Lechl2016a}.
\end{proof}

Thus the error estimation between the approximate $w_{N,h}$ and $w_B$ could be obtained in the following theorem.

\begin{theorem}\label{th:err}
Assume that $f,\,r,\,\zeta$ satisfy the conditions in Theorem \ref{th:err}. Then the solution $w_{N,h}\in\widetilde{X}_{N,h}$ satisfies the error estimate for any $1/2<r'<r$:
\begin{equation}
\|w_{N,h}-w_B\|_{L^2\left(\Wast;H^\ell(D^\Lambda_H)\right)}\leq C\left(N^{-r'}+h^{2-\ell}\right)\|f\|_{H_{r}^{1/2}(\Gamma_H)},\quad\ell=0,1.
\end{equation}
\end{theorem}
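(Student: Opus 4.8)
The plan is to obtain the estimate by a triangle inequality that separates the total error into the finite element error against the truncated field $w_B^N$ and the Fourier truncation error against the exact Bloch transformed solution $w_B$:
\[
\|w_{N,h}-w_B\|_{L^2(\Wast;H^\ell(D^\Lambda_H))}\leq \|w_{N,h}-w_B^N\|_{L^2(\Wast;H^\ell(D^\Lambda_H))}+\|w_B^N-w_B\|_{L^2(\Wast;H^\ell(D^\Lambda_H))}.
\]
The first term is controlled directly by Theorem \ref{th:err_approx}, and the second by the approximation estimate of Theorem \ref{th:approx}. Under the hypotheses inherited from Theorem \ref{th:err_approx} ($f\in H_r^{1/2}(\Gamma_H)$ with $r>1/2$ and $\zeta\in C^{2,1}(\R)$), I fix any $1/2<r'<r$ and express both bounds in terms of $\|f\|_{H_r^{1/2}(\Gamma_H)}$, then absorb the lower-order powers of $h$ and $N$.

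First I would treat the finite element term. Theorem \ref{th:err_approx} gives
\[
\|w_{N,h}-w_B^N\|_{L^2(\Wast;H^\ell)}\leq Ch^{1-\ell}\left(N^{-r'}+h\right)\|f\|_{H_{r'}^{1/2}(\Gamma_H)}\leq C\left(h^{1-\ell}N^{-r'}+h^{2-\ell}\right)\|f\|_{H_r^{1/2}(\Gamma_H)},
\]
where I used the monotonicity of the weighted norms $\|f\|_{H_{r'}^{1/2}}\leq\|f\|_{H_r^{1/2}}$ (since $r'<r$). For $\ell=0$ the factor $h^{1-\ell}=h\leq 1$ and for $\ell=1$ it equals $1$, so in either case $h^{1-\ell}N^{-r'}\leq N^{-r'}$, which already produces the desired $N^{-r'}$ term together with the surviving $h^{2-\ell}$ term.

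Next I would bound the truncation term by applying Theorem \ref{th:approx} with the $\alpha$-regularity index set to $0$ and the spatial index $s=\ell$. Since $\zeta\in C^{2,1}(\R)$, Theorem \ref{th:higher_reg} yields $w_B\in H_0^r(\Wast;\widetilde{H}^2_\alpha(D^\Lambda_H))$; combined with the boundedness of the solution operator of \eqref{eq:var_Bloch} and the mapping properties of the Bloch transform this gives the a priori bound $\|w_B\|_{H_0^r(\Wast;\widetilde{H}^2_\alpha)}\leq C\|f\|_{H_r^{1/2}(\Gamma_H)}$. Using that $H_\alpha^\ell$ and $H^\ell$ are equivalent on the bounded cell $D^\Lambda_H$ uniformly in $\alpha\in\Wast$, I obtain for $\ell=0,1$
\[
\|w_B^N-w_B\|_{L^2(\Wast;H^\ell)}\leq N^{-r}\|w_B\|_{H_0^r(\Wast;H_\alpha^\ell)}\leq CN^{-r}\|f\|_{H_r^{1/2}(\Gamma_H)}\leq CN^{-r'}\|f\|_{H_r^{1/2}(\Gamma_H)},
\]
the last step using $r>r'$ so that $N^{-r}\leq N^{-r'}$.

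Adding the two bounds and collecting the dominant powers, $h^{1-\ell}N^{-r'}+N^{-r'}\leq CN^{-r'}$ while the $h^{2-\ell}$ term persists, so the total is bounded by $C\left(N^{-r'}+h^{2-\ell}\right)\|f\|_{H_r^{1/2}(\Gamma_H)}$, which is the claim. The only genuinely non-mechanical ingredient is the a priori estimate $\|w_B\|_{H_0^r(\Wast;\widetilde{H}^2_\alpha)}\leq C\|f\|_{H_r^{1/2}}$: this is where the $C^{2,1}$-regularity of $\zeta$ enters through Theorem \ref{th:higher_reg} and where one must invoke the continuous dependence of the Bloch transformed field on the data. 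Everything else is routine power-counting in $h$ and $N$ together with the monotonicity of the weighted norms.
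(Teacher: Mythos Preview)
Your proof is correct and follows essentially the same route as the paper: a triangle inequality splitting into the truncation error $\|w_B-w_B^N\|$ (bounded via Theorem~\ref{th:approx} together with the a priori estimate for $w_B$) and the finite element error $\|w_B^N-w_{N,h}\|$ (bounded via Theorem~\ref{th:err_approx}), followed by the same power-counting in $h$ and $N$. You are in fact slightly more explicit than the paper about the monotonicity $\|f\|_{H_{r'}^{1/2}}\leq\|f\|_{H_r^{1/2}}$ and about absorbing $h^{1-\ell}N^{-r'}$ into $N^{-r'}$, but the argument is the same.
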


\begin{proof}
From Theorem \ref{th:approx}, the error between the original solution and the truncated one is bounded by
\begin{equation*}
\|w_B-w_B^N\|_{L^2\left(\Wast;H_\alpha^1(D^\Lambda_H)\right)}\leq N^{-r}\|w_B\|_{H_0^{r}\left(\Wast;H_\alpha^1(D^\Lambda_H)\right)}\leq C N^{-r}\|f\|_{H_r^{1/2}(\Gamma_H)},\quad \ell=0,1.
\end{equation*}
Together with the result in Theorem \ref{th:err_approx},
\begin{equation*}
\begin{aligned}
\|w_B-w_{N,h}\|_{L^2\left(\Wast;H_\alpha^\ell(D^\Lambda_H)\right)}&\leq \|w_B-w_B^{N}\|_{H_0^{r'}\left(\Wast;H_\alpha^\ell(D^\Lambda_H)\right)}+\|w_N^B-w_{N,h}\|_{H_0^{r'}\left(\Wast;H_\alpha^\ell(D^\Lambda_H)\right)}\\
&\leq  C N^{-r}\|f\|_{H_r^{1/2}(\Gamma_H)}+Ch^{1-\ell}\left(N^{-r'}+h\right)\|f\|_{H_{r'}^{1/2}(\Gamma_H)}\\
&\leq C(N^{-r'}+h^{2-\ell})\|f\|_{H_{r}^{1/2}(\Gamma_H)}.
\end{aligned}
\end{equation*}
The proof is finished.
\end{proof}

\section{Numerical implementation for rough surfaces}

In this section, we describe the numerical implementation for the variational problem \eqref{eq:var_discrete}. For convenience, the quasi-periodic fields are periodized and the scattered field is considered instead of the total field. 

Similar to \cite{Lechl2017}, define $w^s(\alpha,x):=w_B(\alpha,x)-(\J_{D_H}u^i)(\alpha,x)$ and then define $w_0(\alpha,x)=e^{\i\alpha x_1}w^s(\alpha,x)$, then $w_0\in L^2(\Wast;H_0^1(D^\Lambda_H))$ (this space means that each function is $\Lambda$-periodic in $x_1$-direction for any fixed $\alpha$). Thus for any fixed $\alpha$, $w_0(\alpha,\cdot)$ is a periodic function in $D^\Lambda_H$. Let $v_0=e^{\i\alpha x_1}v_B(\alpha,x)$, then the variational formulation for $w_0$ is
\begin{equation*}
\int_\Wast a'_\alpha(w_0(\alpha,\cdot),v_0(\alpha,\cdot))\d\alpha+b'(w_0,v_0)=0
\end{equation*}
with the the boundary condition
\begin{equation}\label{eq:boundary}
\int_\Wast\int_{\Gamma_{h_0}^\Lambda}\left[w_0(\alpha,\cdot)-e^{\i\alpha x_1}(\J_{D_H}u^i)(\alpha,\cdot)\right]\overline{t_0}(\alpha,x)\d s\d\alpha=0
\end{equation}
for all $t\in L^2(\Wast;H_0^{-1/2}(\Gamma_0^\Lambda))$. The sesquilinear forms $a'(\cdot,\cdot)$ is defined in $H_0^1(D^\Lambda_H)\times H_0^1(D^\Lambda_H)$ by
\begin{equation*}
a'(w_0,v_0)=\int_{D^\Lambda_H}\left[(\grad_x+\i\alpha{\bm e}_1 )w_0\cdot(\grad_x-\i\alpha{\bm e}_1)\overline{v_0}-k^2w_0\overline{v_0}\right]\d x-\int_{\Gamma^\Lambda_H}\widetilde{T}^+_\alpha\left[w_0\big|_{\Gamma^\Lambda_H}\right]\d s
\end{equation*}
and $b'(\cdot,\cdot)$ is defined in $L^2(\Wast;H_0^1(D^\Lambda_H))\times L^2(\Wast;H_0^1(D^\Lambda_H))$ by
\begin{equation*}
b'(w_0,v_0)=b(e^{-\i\alpha \cdot}w_0,e^{-\i\alpha\cdot}v_0),
\end{equation*}
where ${\bm e}_1=(1,0)^\top$ and $\widetilde{T}^+_\alpha$ is the modified Dirichlet-to-Neumann map defined on the periodic functions on $\Gamma^\Lambda_H$:
\begin{equation*}
\widetilde{T}^+_\alpha(\phi)=\i\sum_{j\in\Z}\sqrt{k^2-|\Lambda^*j-\alpha|^2}\widehat{\phi}(j)e^{\i\Lambda j x_1}\quad\text{ for } \phi(x)=\widehat{\phi}(j)e^{\i\Lambda j x_1}.
\end{equation*} 
Thus the finite Fourier series approximation \eqref{eq:approx_finit}, denoted by $w_0^N$, satisfies the variational formulation
\begin{equation}\label{eq:var_per}
\int_\Wast a'_\alpha(w_0^N(\alpha,\cdot),v_0(\alpha,\cdot))\d\alpha+b'_N(w_0^N,v_0)=0
\end{equation}
together with the same boundary condition \eqref{eq:boundary}, where $b'_N(\cdot,\cdot)$ is defined by
\begin{equation*}
b'_N(w_0,v_0):=b_N(e^{-\i\alpha\cdot}w_0,e^{-\i\alpha\cdot}v_0).
\end{equation*}

Now we can discretize the variational formulation \eqref{eq:var_per}. Recall that the  nodal functions $\left\{\phi_M^{(\ell)}\right\}_{\ell=1}^M$ that are periodic and vanishes on the boundary $\Gamma_{h_0}^\Lambda$, we have to introduce the basic functions on the nodal points on $\Gamma_{h_0}^\Lambda$. Suppose $M'$ is an integer larger than $M$, $x_{M'}^{(\ell)}$ are nodal points on the mesh $\mathcal{M}_h$, where $x_{M'}^{(\ell)},\,\ell=1,\dots,M$ does not lie on $\Gamma^\Lambda_{h_0}$ and $x_{M'}^{(\ell)},\,\ell=M+1,\dots,M'$ lies on $\Gamma^\Lambda_{h_0}$.  Let  $\left\{\psi_{M'}^{(\ell)}\right\}_{\ell=1}^{M'}$  be nodal functions that equals to one at one nodal point $x_{M'}^{(\ell)}$ and zero otherwise, assume that $\psi_{M'}^{(\ell)}=\psi_M^{(\ell)}$ for any $\ell=1,\dots,M$, then define 
\begin{equation*}
V_h:={\rm span}\left\{\phi_{M'}^{(\ell)}\right\}_{\ell=1}^{M'}\subset H_0^1(D^\Lambda_H).
\end{equation*}
Recall the basis functions $\left\{\psi_N^{(j)}\right\}_{j=1}^N$ in $\alpha\in\Wast$, we can define the new finite element space 
\begin{equation*}
X_{N,h}=\left\{v_{N,h}(\alpha,x)=\sum_{j=1}^N\sum_{\ell=1}^{M'}v_{N,h}^{(j,\ell)}\psi_N^{(j)}(\alpha)\phi_{M'}^{(\ell)}(x):\,v_{N,h}^{(j,\ell)}\in\C\right\}\subset L^2(\Wast;H_0^1(D^\Lambda_H)).
\end{equation*}
We can define the subspaces
\begin{equation*}
Y^{(j)}_{N,h}=\left\{v_{N,h}(\alpha,x)=\sum_{\ell=1}^{M'}v_{N,h}^{(j,\ell)}\psi_N^{(j)}(\alpha)\phi_{M'}^{(\ell)}(x):\,v_{N,h}^{(j,\ell)}\in\C\right\},\quad j=1,\dots,N.
\end{equation*}
and also the subspaces that vanish on $\Gamma^\Lambda_0$
\begin{equation*}
\widetilde{Y}^{(j)}_{N,h}=\left\{v_{N,h}(\alpha,x)=\sum_{\ell=1}^{M}v_{N,h}^{(j,\ell)}\psi_N^{(j)}(\alpha)\phi_{M'}^{(\ell)}(x):\,v_{N,h}^{(j,\ell)}\in\C\right\},\quad j=1,\dots,N.
\end{equation*}
Let
\begin{equation*}
Y_{N,h}^0=Y_{N,h}^{(1)}\oplus\cdots\oplus Y_{N,h}^{(N)},\quad \widetilde{Y}_{N,h}^0=\widetilde{Y}_{N,h}^{(1)}\oplus\cdots\oplus \widetilde{Y}_{N,h}^{(N)}.
\end{equation*}
Then for any $w_0\in X_{N,h}$, there is a unique vector $\left(w_0^{(j)}\right)_{j=1}^N\in Y_{N,h}^0$ such that 
\begin{equation*}
w_0(\alpha,x)=\sum_{j=1}^N w_0^{(j)}(x). 
\end{equation*}
Let $w_0^{(j)}=\sum_{\ell=1}^{M'}w_{N,h}^{(j,\ell)}\phi_{M'}^{(\ell)}(x)\psi_N^{(j)}(\alpha)$. The boundary term of \eqref{eq:var_per} could  be approximated in the similar way of \cite{Lechl2017}:
\begin{equation*}
w_0^{(j)}(x_{M'}^{(\ell)})=\int_{\Wast}e^{\i\alpha \left(x_{M'}^{\ell}\right)_1}(\J_{D_H}u^i)\left(\alpha,x_{M'}^{\ell}\right)\psi_N^{(j)}(\alpha)\d\alpha
\end{equation*}
for $j=1,\dots,N$ and $\ell=M+1,\dots,M'$. Thus
\begin{equation*}
w_{N,h}^{(j,\ell)}= \exp\left(\i\alpha_j \left(x_{M'}^{\ell}\right)_1\right)(\J_{D_H}u^i)\left(\alpha_j,x_{M'}^{\ell}\right)(:=c_{j,\ell}).
\end{equation*}
 For  $m=1,\dots,M$ and $n=1,\dots,N$, let $v_{m,n}=\phi_{M'}^{(m)}(x)\psi_N^{(n)}(\alpha)$. Then
\begin{equation*}
\begin{aligned}
\int_\Wast a'_\alpha(w_0,v_{m,n})\d\alpha&=\sum_{j=1}^N\sum_{\ell=1}^{M'}w_{N,h}^{(j,\ell)}\, \int_\Wast a'_\alpha\left(\phi_{M'}^{(\ell)},\phi_{M'}^{(m)}\right)\psi_N^{(j)}(\alpha)\psi_N^{(n)}(\alpha)\d\alpha\\
&:=\sum_{j=1}^N\sum_{\ell=1}^{M'}a_{j,\ell,n,m} w_{N,h}^{(j,\ell)}\delta_{j,n},
\end{aligned}
\end{equation*}
where $\delta_{j,n}$ equals to $1$ when $j=n$ and equals to $0$ otherwise, the coefficient is defined by
\begin{equation*}
a_{j,\ell,n,m}=\int_{\alpha_N^{(n)}-\frac{\pi}{N\Lambda}}^{\alpha_N^{(n)}+\frac{\pi}{N\Lambda}} a'_{\alpha}\left(\phi_{M'}^{(\ell)},\phi_{M'}^{(m)}\right)\d\alpha.
\end{equation*}
Then consider the discretization of the term $b'(\cdot,\cdot)$. Recall the inverse Bloch transform in \eqref{eq:inv_Bloch},
\begin{equation*}
\left(\J_{D_H}^{-1}w_0 e^{-\i\alpha(\cdot)_1}\right)\left(x+\left(\begin{matrix}
\Lambda \ell'\\0
\end{matrix} \right)\right)=\J_{D_H,N,\ell'}^{-1}\left(\left\{w_{N,h}^{j,\ell}e^{-\i\alpha (\cdot)_1}\right\}_{j,\ell=1}^{N,M'}\right),
\end{equation*}
where $(\cdot)_1$ is the first component of the variable. 
Thus the discrete form of $b'_N(\cdot,\cdot)$ 
\begin{equation*}
\begin{aligned}
&b'_N(w_0,v_{m,n})=\sum_{\ell'\in\Z_N}b_N^{(\ell')}\left(\left(\J_{D_H}^{-1}w_0 e^{-\i\alpha(\cdot)_1}\right)\left(x+\left(\begin{smallmatrix}
\Lambda\ell'\\0
\end{smallmatrix}
\right)\right),\left(\J_{D_H}^{-1}\phi_{M'}^{(m)}\psi_N^{(n)}e^{-\i\alpha(\cdot)_1}\right)\left(x+\left(\begin{smallmatrix}
\Lambda\ell'\\0
\end{smallmatrix}
\right)\right)\right)\\
&=\sum_{\ell'\in\Z_N}\sum_{j=1}^N\sum_{\ell=1}^{M'} b_N^{(\ell')}\left(\J_{D_H,N,\ell'}^{-1}\left(\left\{w_{N,h}^{j,\ell}e^{-\i\alpha (\cdot)_1}\right\}_{j,\ell=1}^{N,M'}\right),\J_{D_H,N,\ell'}^{-1}\left(\left\{\delta_{j,\ell}^{m,n}e^{-\i\alpha (\cdot)_1}\right\}_{j,\ell=1}^{N,M'}\right)\right)\\
&:=\sum_{\ell'\in\Z_N}\sum_{j=1}^N\sum_{\ell=1}^{M'}b_{j,
\ell,n,m}^{\ell'} w_{N,h}^{(j,\ell)}
\end{aligned}
\end{equation*}
where $\delta_{j,\ell}^{m,n}$ equals to $1$ if and only if $j=m$ and $\ell=n$, 
\begin{equation*}
b_{j,\ell,n,m}^{\ell'}=b_N^{(\ell')}\left(\J_{D_H,N,\ell'}^{-1}\left(\left\{\delta_{j_1,\ell_1}^{j,\ell}e^{-\i\alpha (\cdot)_1}\right\}_{j_1,\ell_1=1}^{N,M'}\right),\J_{D_H,N,\ell'}^{-1}\left(\left\{\delta_{j,\ell}^{m,n}e^{-\i\alpha (\cdot)_1}\right\}_{j,\ell=1}^{N,M'}\right)\right).
\end{equation*}
Thus $w_0^{(j)}$ satisfies the linear system
\begin{eqnarray}\label{eqn1}
 \sum_{\ell=1}^{M'}a_{j,\ell,n,m}\delta_{j,n} w_{N,h}^{(j,\ell)}+\sum_{\ell'\in\Z_N}\sum_{\ell=1}^{M'}b_{j,\ell,n,m}^{\ell'} w_{N,h}^{(j,\ell)}&=&0;\\\label{eqn2}
 w_{N,h}^{j,\ell}&=&c_{j,\ell}.
\end{eqnarray}
Let $W_j=\left(w_{N,h}^{(j,1)},\dots,w_{N,h}^{(j,M')}\right)^\top$ and $F_j=\left(F_{(j,1)},\dots,F_{(j,M')}\right)^\top$ where $F_{(j,\ell)}=0$ for $\ell=1,\dots,M$ and $F_{(j,\ell)}=c_{j,\ell}$ when $\ell=M+1,\dots,M'$. Then \eqref{eqn1}-\eqref{eqn2} is equivalent to the following linear system
\begin{equation}\label{eq:system}
(A+B){\bm W}={\bm F}
\end{equation}
where
\begin{equation*}
A=\left(\begin{matrix}
A_1 & 0 & \cdots & 0\\
0 & A_2 & \cdots & 0\\
\vdots & \vdots & \vdots & \vdots \\
0 & 0 & \cdots & A_N
\end{matrix}
\right),\,A=\left(\begin{matrix}
B_{11} & B_{12} & \cdots & B_{1N}\\
B_{21} & B_{22} & \cdots & B_{2N}\\
\vdots & \vdots & \vdots & \vdots \\
B_{N1} & B_{N2} & \cdots & B_{NN}
\end{matrix}
\right),\, {\bm W}=\left(\begin{matrix}
W_1 \\ W_2 \\ \vdots \\ W_N
\end{matrix}
\right), \, {\bm F}=\left(\begin{matrix}
F_1 \\ F_2 \\ \vdots \\ F_N
\end{matrix}
\right)
\end{equation*}
where $A_{j,n}(m,\ell)=\left(a_{j,\ell,n,m}\right)_{\ell,m}$ for $1\leq m\leq M'$ and $1\leq \ell\leq M$, $A_j(m,\ell)=\delta_{m,\ell}$ otherwise, $B_{j,n}(\ell,m)=\sum_{\ell'\in\Z_N}b_{j,\ell,m,n}^{\ell'}$ for $1\leq m\leq M$ and $1\leq \ell\leq M$.

To solve the linear system \eqref{eq:system} of size $NM'\times NM'$, the iterative method is introduced for a fast convergence rate. The GMRES iteration scheme with a pre-conditioner is utilized, and is described in the following steps:
\begin{enumerate}
\item For each matrix $A_j$, let the incomplete LU decomposition  be $(L_j, U_j)$ for $j=1,\dots,N$. Then let the lower triangular matrix $L={\rm diag}(L_1,\dots,L_N)$ and the upper triangular matrix $U={\rm diag}(U_1,\dots,U_N)$.
\item Solve the linear system \eqref{eq:system} by GMRES with $(L,U)$ as the pre-conditioner.
\end{enumerate}

\section{Numerical examples}

In this section, we show four numerical examples of the rough surface scattering problems. We choose two rough surfaces above the straight line $\Gamma_{h_0}$, defined by the functions
\begin{equation*}
\zeta_1=1.1;\quad \zeta_2=1+0.1\sin(2.4 t),
\end{equation*}
then the surfaces are defined by
\begin{equation*}
\Gamma_j:=\left\{(x_1,\zeta_j(x_1)):\,x_1\in\R\right\},\text{ where }j=1,2.
\end{equation*}

Let the incident fields be points sources located at two different points, i.e., 
\begin{equation*}
P_1=(0.5,0.4);\quad P_2=(\pi,0.2).
\end{equation*}
If $y=(y_1,y_2)^\top$ is the location of the point source, then  the half space Green's function is defined by
\begin{equation*}
G(x,y)=\frac{\i}{4}\left[H_0^{(1)}(k|x-y|)-H_0^{(1)}(k|x-y'|)\right],\quad y=(y_1,-y_2)^\top.
\end{equation*}
From \cite{Lechl2016a}, for any fixed $y$, $G(\cdot,y)\in H_r^1(D_H)$ for any $r<1$. As the field $G(\cdot,y)$ is propagating upwards, the scattered field 
\begin{equation*}
u^s=G(\cdot,y) \quad\text{ on }\Gamma
\end{equation*}
 is exactly the function $G(\cdot,y)$. 

In the numerical examples, the following parameters are chosen:
\begin{equation*}
\Lambda=2\pi,\,\Lambda^*=1,\, H=3,\,H_0=2.95,\,h_0=1.
\end{equation*}
The numerical scheme is carried out for the mesh size $h$  chosen as $0.16,\,0.08,\,0.04,\,0.02$ and the parameter $N$  taken as $10,20,40,80$. Then the following four examples are considered for different $h$ and $N$, and the relative $L^2$-errors on $\Gamma_H$, defined by
\begin{equation*}
err=\frac{\|u_{N,h}-u\|_{L^2(\Gamma^\Lambda_H)}}{\|u\|_{L^2(\Gamma^\Lambda_H)}}
\end{equation*}
are listed in Table \ref{surf1k1}-\ref{surf2k2}.

The examples are chosen by different wave numbers, locations of point sources and rough surfaces:\\

\noindent{\bf Example 1} The wave number $k=1$, the point source is located at $P_1$, the rough surface is $\Gamma_1$, the relative errors are listed in Table \ref{surf1k1}.\\

\noindent{\bf Example 2} The wave number $k=6$, the point source is located at $P_1$, the rough surface is $\Gamma_1$, the relative errors are listed in Table \ref{surf1k2}.\\

\noindent{\bf Example 3} The wave number $k=1$, the point source is located at $P_2$, the rough surface is $\Gamma_2$, the relative errors are listed in Table \ref{surf2k1}.\\

\noindent{\bf Example 4} The wave number $k=6$, the point source is located at $P_2$, the rough surface is $\Gamma_2$, the relative errors are listed in Table \ref{surf2k2}.\\

\begin{table}[htb]
\centering
\caption{Relative $L^2$-errors for Example 1.}\label{surf1k1}
\begin{tabular}
{|p{1.8cm}<{\centering}||p{2cm}<{\centering}|p{2cm}<{\centering}
 |p{2cm}<{\centering}|p{2cm}<{\centering}|p{2cm}<{\centering}|}
\hline
  & $h=0.16$ & $h=0.08$ & $h=0.04$ & $h=0.02$\\
\hline
\hline
$N=10$&$4.6$E$-02$&$4.6$E$-02$&$4.6$E$-02$&$4.6$E$-02$\\
\hline
$N=20$&$1.7$E$-02$&$1.6$E$-02$&$1.6$E$-02$&$1.6$E$-02$\\
\hline
$N=40$&$6.9$E$-03$&$5.9$E$-03$&$5.7$E$-03$&$5.7$E$-03$\\
\hline
$N=80$&$4.5$E$-03$&$2.3$E$-03$&$2.1$E$-03$&$2.0$E$-03$\\
\hline
\end{tabular}
\end{table}

\begin{table}[htb]
\centering
\caption{Relative $L^2$-errors for Example 2.}\label{surf1k2}
\begin{tabular}
{|p{1.8cm}<{\centering}||p{2cm}<{\centering}|p{2cm}<{\centering}
 |p{2cm}<{\centering}|p{2cm}<{\centering}|p{2cm}<{\centering}|}
\hline
  & $h=0.16$ & $h=0.08$ & $h=0.04$ & $h=0.02$\\
\hline
\hline
$N=10$&$3.3$E$-01$&$1.0$E$-01$&$8.9$E$-02$&$1.1$E$-01$\\
\hline
$N=20$&$3.2$E$-01$&$9.1$E$-02$&$3.8$E$-02$&$3.8$E$-02$\\
\hline
$N=40$&$3.2$E$-01$&$8.9$E$-02$&$2.5$E$-02$&$1.5$E$-02$\\
\hline
$N=80$&$3.2$E$-01$&$8.9$E$-02$&$2.3$E$-02$&$7.6$E$-03$\\
\hline
\end{tabular}
\end{table}

\begin{table}[htb]
\centering
\caption{Relative $L^2$-errors for Example 3.}\label{surf2k1}
\begin{tabular}
{|p{1.8cm}<{\centering}||p{2cm}<{\centering}|p{2cm}<{\centering}
 |p{2cm}<{\centering}|p{2cm}<{\centering}|p{2cm}<{\centering}|}
\hline
  & $h=0.16$ & $h=0.08$ & $h=0.04$ & $h=0.02$\\
\hline
\hline
$N=10$&$5.9$E$-02$&$5.9$E$-02$&$5.8$E$-02$&$5.8$E$-02$\\
\hline
$N=20$&$2.2$E$-02$&$2.1$E$-02$&$2.1$E$-02$&$2.1$E$-02$\\
\hline
$N=40$&$8.7$E$-03$&$7.6$E$-03$&$7.3$E$-03$&$7.3$E$-03$\\
\hline
$N=80$&$4.5$E$-03$&$2.9$E$-03$&$2.6$E$-03$&$2.6$E$-03$\\
\hline
\end{tabular}
\end{table}

\begin{table}[htb]
\centering
\caption{Relative $L^2$-errors for Example 4.}\label{surf2k2}
\begin{tabular}
{|p{1.8cm}<{\centering}||p{2cm}<{\centering}|p{2cm}<{\centering}
 |p{2cm}<{\centering}|p{2cm}<{\centering}|p{2cm}<{\centering}|}
\hline
  & $h=0.16$ & $h=0.08$ & $h=0.04$ & $h=0.02$\\
\hline
\hline
$N=10$&$4.0$E$-01$&$1.1$E$-01$&$5.5$E$-02$&$6.3$E$-02$\\
\hline
$N=20$&$4.0$E$-01$&$1.1$E$-01$&$3.2$E$-02$&$2.3$E$-02$\\
\hline
$N=40$&$4.0$E$-01$&$1.1$E$-01$&$2.9$E$-02$&$1.0$E$-02$\\
\hline
$N=80$&$4.0$E$-01$&$1.1$E$-01$&$2.8$E$-02$&$7.3$E$-03$\\
\hline
\end{tabular}
\end{table}

From the numerical results in Table \ref{surf1k1}-\ref{surf2k2}, the relative error decreases when $N$ gets larger and $h$ gets less. For the wave number $k=1$, the error brought by $N$ is the dominant one, while the error comes from $h$ is relatively small. Then for small enough $h$'s, e.g., $h=0.02,\,0.04$ in Table \ref{surf1k1} and \ref{surf2k1}, the convergence rate with respect to $N$ could reach $O(N^{-1.5})$ (see Figure \ref{eg1}), which is even higher than expected in Theorem \ref{th:err}, i.e., $O(N^{-r'})$ for some $r'<1$. The examples for $k=6$ are the opposite, as the dominant error is caused by $h$. For large enough $N$'s, e.g., $N=80$ in Table \ref{surf1k2} and \ref{surf2k2}, the convergence rate with respect to $h$ could reach $O(h^{1.93})$, which is almost as high as expected (see Figure \ref{eg2}) in Theorem \ref{th:err}. Thus the numerical examples illustrate the convergence rate estimated in this paper.

\begin{figure}[H]
\centering
\includegraphics[width=7cm]{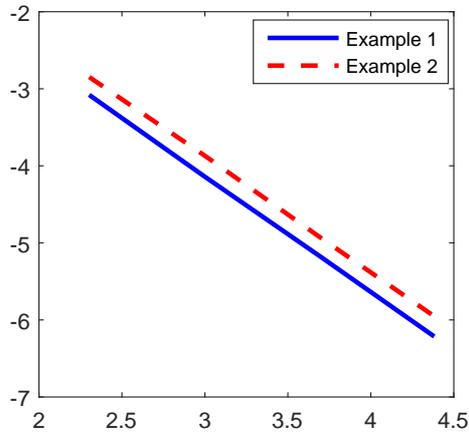}
\caption{The relative $L^2$-errors for Exampel 1 and 3 with $h=0.02$ plotted in logarithmic scale over $N$.}
\label{eg1}
\end{figure}

\begin{figure}[H]
\centering
\includegraphics[width=7cm]{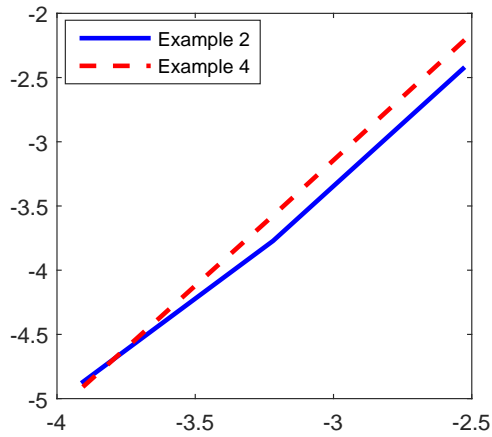}
\caption{The relative $L^2$-errors for Exampel 2 and 4 with $N=80$ plotted in logarithmic scale over $h$.}
\label{eg2}
\end{figure}

\section*{The Floquet-Bloch transform}

The main tool used in this paper is the Floquet-Bloch transform. In this section, we will recall the definition and some basic properties of the Bloch transform in periodic domains in $\R^2$ (for details see \cite{Lechl2016}).

Suppose $\Omega\subset\R^2$ is $\Lambda$-periodic in $x_1$ - direction, i.e., for any ${ x}=(x_1,x_2)^\top\in\Omega$, the translated point $(x_1+\Lambda j,x_2)\in\Omega,\,\forall{ j}\in\Z$. 
Define one periodic cell by $\Omega^\Lambda:=\Omega\cap\left[\W\times\R\right]$. For any $\phi\in C_0^\infty(\Omega)$, define the (partial)  Bloch transform in $\Omega$, i.e., $\J_{\Omega}$, of $\phi$ as
\begin{equation*}
\left(\J_\Omega\phi\right)({\alpha},{ x})=C_\Lambda\sum_{{ j}\in\Z}\phi\left({ x}+\left(\begin{matrix}
\Lambda { j}\\0
\end{matrix}\right)\right)e^{-\i{\alpha}\cdot\Lambda{ j}},\quad {\alpha}\in\R ,\,{ x}\in\Omega^\Lambda
\end{equation*}
where $C_\Lambda=\sqrt{\frac{\Lambda}{2\pi}}$.

\begin{remark}
The periodic domain $\Omega$ is not required to be bounded in $x_2$-direction.  
\end{remark}

We can also define the weighted Sobolev  space on the unbounded domain $\Omega$ by
\begin{equation*}
H_r^s(\Omega):=\left\{\phi\in \mathcal{D}'(\Omega):\,(1+|{ x}|^2)^{r/2}\phi({ x})\in H^s(\Omega)\right\}.
\end{equation*}
For any $\ell\in\N$, $s\in\R$, we can also define the following Hilbert space by
\begin{equation*}
H^\ell(\Wast;H^s(\Omega^\Lambda)):=\left\{\psi\in\mathcal{D}'(\Wast\times\Omega^\Lambda):\,\sum_{m=0}^\ell\int_\Wast\left\|\partial^m_{\alpha}\psi({\alpha},\cdot)\right\|\d{\alpha}<\infty\right\},
\end{equation*}
and extend to any $r\in\R$ by interpolation and duality arguments  similarly. The space $H_0^r(\Wast;H_\alpha^s(\Omega^\Lambda))$ could be defined in the same way. The following properties for the $d$-dimensional (partial) Bloch transform $\J_\Omega$ is also proved in \cite{Lechl2016}.

\begin{theorem}\label{th:Bloch_property}
The Bloch transform $\J_\Omega$ extends to an isomorphism between $H_r^s(\Omega)$ and $H_0^r(\Wast;H_\alpha^s(\Omega^\Lambda))$ for any $s,r\in\R$. Its inverse has the form of
\begin{equation*}
(\J^{-1}_\Omega\psi)\left({ x}+\left(\begin{matrix}
\Lambda { j}\\0
\end{matrix}\right)\right)=C_\Lambda\int_\Wast \psi({\alpha},{ x})e^{\i{\alpha}\cdot\Lambda{ j}}\d{\alpha},\quad x_1\in\Omega^\Lambda,\,{ j}\in\Z,
\end{equation*}
and the adjoint operator $\J^*_\Omega$ with respect to the scalar product in $L^2(\Wast;L^2(\Omega^\Lambda))$ equals to the inverse $\J^{-1}_\Omega$. Moreover, when $r=s=0$, the Bloch transform $\J_\Omega$ is an isometric isomorphism.
\end{theorem}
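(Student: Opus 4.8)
The plan is to assemble the isomorphism from the core $L^2$ statement by treating the two indices through independent mechanisms: the spatial smoothness $s$ rides along because $\J_\Omega$ commutes with differentiation in $x$, while the weight $r$ is converted into smoothness in the dual variable $\alpha$ through Parseval's identity for Fourier series. Throughout I would work first on the dense subspace $C_0^\infty(\Omega)$, establish the norm equivalences there, and then extend by continuity.

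First I would settle $r=s=0$. Writing $\phi_j(x):=\phi\big(x+(\Lambda j,0)^\top\big)$ for $x\in\Omega^\Lambda$, the $\Lambda$-periodicity of $\Omega$ gives the orthogonal decomposition $L^2(\Omega)\cong\ell^2\big(\Z;L^2(\Omega^\Lambda)\big)$ via $\phi\mapsto(\phi_j)_{j\in\Z}$. On the dual side $|\Wast|=2\pi/\Lambda$, so $\{C_\Lambda e^{-\i\alpha\cdot\Lambda j}\}_{j\in\Z}$ is an orthonormal basis of $L^2(\Wast)$, and $\J_\Omega\phi(\alpha,\cdot)=\sum_{j\in\Z}\phi_j\,C_\Lambda e^{-\i\alpha\cdot\Lambda j}$ is exactly the associated Fourier series. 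Parseval's identity then gives $\|\J_\Omega\phi\|^2_{L^2(\Wast;L^2(\Omega^\Lambda))}=\sum_{j}\|\phi_j\|^2_{L^2(\Omega^\Lambda)}=\|\phi\|^2_{L^2(\Omega)}$, so $\J_\Omega$ extends to an isometry, and completeness of the basis yields surjectivity, hence an isometric isomorphism. The inverse is the Fourier-coefficient map $\phi_j(x)=C_\Lambda\int_\Wast\J_\Omega\phi(\alpha,x)e^{\i\alpha\cdot\Lambda j}\d\alpha$, which is the stated formula for $\J_\Omega^{-1}$; since an isometric isomorphism of Hilbert spaces is unitary, the relation $\J_\Omega^{*}=\J_\Omega^{-1}$ with respect to the $L^2(\Wast;L^2(\Omega^\Lambda))$ scalar product follows at once.

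To include the spatial index I would note that for $s\in\N$ every derivative $\partial_x^\beta$ commutes with $\J_\Omega$, because the transform is built from translations in $x_1$ together with multiplication by the $x$-independent phases $e^{-\i\alpha\cdot\Lambda j}$; these same phases are what turn the cell-wise translates into $\alpha$-quasiperiodic functions, so the image carries the correct quasiperiodic boundary conditions and lands in $H_\alpha^s(\Omega^\Lambda)$. Applying the $L^2$ result to each $\partial_x^\beta\phi$ shows $\J_\Omega:H_0^s(\Omega)\to L^2(\Wast;H_\alpha^s(\Omega^\Lambda))$ is an isomorphism, and real $s$ follows by interpolation and duality. For the weight index I would use the Paley--Wiener-type duality between polynomial weights in the periodicity direction and $\alpha$-smoothness: from the definition of $H^\ell(\Wast;\cdot)$ I compute $\partial_\alpha^m\J_\Omega\phi=C_\Lambda\sum_{j}(-\i\Lambda j)^m\phi_j e^{-\i\alpha\cdot\Lambda j}$, so Parseval gives $\int_\Wast\|\partial_\alpha^m\J_\Omega\phi\|^2\d\alpha=\sum_{j}|\Lambda j|^{2m}\|\phi_j\|^2$, and summing over $m\le\ell$ produces a weight $\sum_{m\le\ell}|\Lambda j|^{2m}\sim(1+|j|^2)^\ell$ that matches $\|(1+|x|^2)^{\ell/2}\phi\|_{L^2(\Omega)}^2$ once one checks that on each translated cell $(1+|x+(\Lambda j,0)^\top|^2)^{\ell/2}$ is comparable to $(1+|j|^2)^{\ell/2}$. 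Extending to real $r$ by interpolation and duality, and combining with the $s$-mechanism, which acts on the independent variable $x$, delivers the full isomorphism $H_r^s(\Omega)\to H_0^r(\Wast;H_\alpha^s(\Omega^\Lambda))$; the inverse and adjoint identities survive the extension by density of $C_0^\infty(\Omega)$.

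I expect the weight-to-smoothness step to be the main obstacle. The cell-wise comparability $(1+|x+(\Lambda j,0)^\top|^2)\sim(1+|j|^2)$ is immediate when the cell is bounded in $x_2$, as for $\Omega^\Lambda=D_H^\Lambda$ in the application, but needs care for domains unbounded in $x_2$; and the passage from integer to real $r$ must be carried out so that the interpolation scales agree on both the physical and the transformed side. These points aside, the argument is a systematic bookkeeping of Parseval's identity in the two variables, and the detailed version may be taken over from \cite{Lechl2016}.
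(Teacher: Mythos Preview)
The paper does not prove this theorem; it merely states it in the appendix and attributes the proof to \cite{Lechl2016} (``The following properties for the $d$-dimensional (partial) Bloch transform $\J_\Omega$ is also proved in \cite{Lechl2016}''). So there is no in-paper argument to compare against.

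Your outline is the standard route and is essentially correct: reduce to the $L^2$ case via Parseval for the orthonormal system $\{C_\Lambda e^{-\i\alpha\Lambda j}\}_{j\in\Z}$ in $L^2(\Wast)$, lift to integer $s$ by commutation with $\partial_x^\beta$, lift to integer $r$ by the weight--smoothness duality $\partial_\alpha^m\leftrightarrow(-\i\Lambda j)^m$, and interpolate/dualize for real indices. Note, incidentally, that with the Fourier-coefficient description of the $H_0^r(\Wast;X_\alpha)$ norm given in Remark~\ref{rem:Four}, the $r$-step becomes almost tautological: the Fourier coefficients of $\J_\Omega\phi$ in $\alpha$ are exactly the cell translates $\phi_j$, so the transformed norm is $\sum_j(1+|j|^2)^r\|\phi_j\|_{X_\alpha}^2$ by definition, and only the equivalence with $\|(1+|x|^2)^{r/2}\phi\|$ remains to be checked.

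The one genuine issue you flag yourself is real: for domains unbounded in $x_2$ the pointwise comparability $(1+|x+(\Lambda j,0)^\top|^2)^{r}\sim(1+|j|^2)^{r}$ fails, since $|x_2|$ is not controlled. The cited reference handles this by working with weights that act only in the periodic direction (or, equivalently, by taking the weighted norm on $\Omega$ to be defined through the cell decomposition), and in the present paper the result is only applied on the strips $D_H$, $\Gamma_H$, which are bounded in $x_2$. You should make explicit which definition of $H_r^s(\Omega)$ you are using in the unbounded case, or restrict the statement accordingly.
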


Another important property of the Bloch transform is the commutes with partial derivatives, see \cite{Lechl2016}. If $u\in H_r^n(\Omega)$ for some $n\in\N$, then for any ${ \gamma}=(\gamma_1,\gamma_2)\in\N^2$ with $|\gamma|=|\gamma_1|+|\gamma_2|\leq N$,
\begin{equation*}
\partial^{ \gamma}_{  x} \left(\J_\Omega u\right)({ \alpha},{  x})=\J_\Omega[\partial^{ \gamma} u]({ \alpha},{  x}).
\end{equation*}

\begin{remark}
The definition of the partial Bloch transform could also be extended to other periodic domains, for example, periodic hyper-surfaces. If $\Gamma$ is a $\Lambda$-periodic surface defined in $\R^2$, then we can define $\J_\Gamma$ in the same way, and obtain the same properties. In this paper, we will denote the Bloch transform $\J_X$ by the partial Bloch transform in the domain $X\subset\R^2$, which is periodic with respect to $x_1$-direction.
\end{remark}

\begin{remark}\label{rem:Four}
There is an alternative definition for the space $H_0^r(\Wast;X_{ \alpha})$, where $X_{ \alpha}$ is a family of Hilbert spaces that are ${ \alpha}$-quasi-periodic in $\widetilde{  x}$. Let 
\begin{equation*}
\phi_{\Lambda^*}^{({  j})}({ \alpha})=C_\Lambda e^{-\i{ \alpha}\cdot\Lambda {  j}},\,{  j}\in\Z
\end{equation*}
be a complete orthonormal system in $L^2(\Wast)$, then any function $\psi\in \mathcal{D}'(\Wast\times\Omega^\Lambda)$ has a Fourier series
\begin{equation*}
\psi({ \alpha},{  x})=C_\Lambda \sum_{{ \ell}\in\Z}\hat{\psi}_{\Lambda^*}({ \ell},{  x})e^{-\i{ \alpha}\cdot\Lambda{ \ell}},
\end{equation*}
where $\hat{\psi}_{\Lambda^*}({ \ell},{  x})=<\psi(\cdot,{  x}),\phi_{\Lambda^*}^{({ \ell})}>_{L^2(\Wast)}$. Then the squared norm of any $\psi\in H_0^r(\Wast;X_{ \alpha})$ equals to
\begin{equation*}
\|\psi\|^2_{H_0^r(\Wast;X_{ \alpha})}=\sum_{{ \ell}\in\Z}(1+|{ \ell}|^2)^r\left\|\hat{\psi}_{\Lambda^*}({ \ell},\cdot)\right\|^2_{X_{ \alpha}}.
\end{equation*}
\end{remark}

\bibliographystyle{alpha}
\bibliography{ip-biblio} 

\end{document}